\documentclass[11pt]{amsart}

\usepackage{tikz}
\usepackage{graphicx}
\usepackage{amssymb}
\usepackage[shortlabels]{enumitem}
\usepackage{caption}
\usepackage{subcaption}
\usepackage{float}
\usepackage{hyperref}





\hfuzz1pc 


\newtheorem{theorem}{Theorem}[section]

\newtheorem{lemma}[theorem]{Lemma}

\newtheorem{proposition}[theorem]{Proposition}

\newtheorem{conjecture}[theorem]{Conjecture}

\newtheorem{maintheorem}{Theorem}

\theoremstyle{definition}

\newtheorem{remark}[theorem]{Remark}

\theoremstyle{remark}

\numberwithin{equation}{section}
\numberwithin{figure}{section}



\newcommand{\Z}{\mathbb Z}
\newcommand{\R}{\mathbb R}
\newcommand{\N}{\mathbb{N}}
\newcommand{\T}{\mathbb{T}}
\newcommand{\eps}{\varepsilon}

\newcommand{\U}{\mathcal U}
\newcommand{\C}{\mathcal C}
\newcommand{\V}{\mathcal V}

\newcommand{\diff}{\operatorname{Diff}}
\newcommand{\phc}{\operatorname{Phc}}

\newcommand{\transv}{\pitchfork}
\newcommand{\zeroeq}{\stackrel{\circ}{=}}

\newcommand{\diag}{\mathop{\text{diag}}}

\DeclareMathOperator{\nuh}{Nuh}
\DeclareMathOperator{\pper}{Per}


\newcommand{\eval}[2][\right]{\relax
  \ifx#1\right\relax \left.\fi#2#1\rvert}




\begin{document}

\title{New examples of stably ergodic diffeomorphisms in dimension 3}
\author{Gabriel N\'u\~nez}
\address{Department of Exact and Natural Sciences, 
Faculty of Engineering and Technology,
Catholic University of Uruguay, 
Comandante Braga 2715, 
11.600 Montevideo, Uruguay.
}
\email{francisco.nunez@ucu.edu.uy}
\author{Davi Obata}
\address{Department of Mathematics, CNRS UMR 8088, Universit\'e de Cergy-Pontoise, 2, av.  Adolphe Chauvin F-95302 Cergy-Pontoise, France.}
\email{davi.obata@math.u-psud.fr}
\begin{thanks}
{DO  was supported by the projects ERC project 692925 NUHGD, and ANR BEKAM : ANR-15-CE40-0001.}
\end{thanks}
\author{Jana Rodriguez Hertz}
\address{1. Department of Mathematics,
Southern University of Science and Technology of China, No 1088, Xueyuan Rd., Xili, Nanshan District, Shenzhen, Guangdong 518055, China}
\address{2. SUSTech International Center for Mathematics}
\email{rhertz@sustech.edu.cn}
\begin{thanks}
{JRH was partially supported by NSFC 11871262 and NSFC 11871394}
\end{thanks}
 
\begin{abstract}
{We prove that in the isotopy class of any volume preserving partially hyperbolic diffeomorphism in a $3$-dimensional manifold, there is a non-partially hyperbolic stably ergodic diffeomorphism. In particular, we provide new examples of stably ergodic diffeomorphisms in 3-dimensional manifolds with respect to a smooth volume measure.}
\end{abstract}
\maketitle

\section{Introduction}

Let $M$ be a closed compact manifold and let $m$ be a smooth probability measure on $M$. A diffeomorphism $f:M \to M$ that preserves the measure $m$ is \emph{ergodic} if any measurable set $B$ such that $f(B) = B$ verifies $m(B) =0$ or $1$. Ergodicity means that from the point of view of the measure the system cannot be decomposed into smaller invariant pieces.

Denote by $\diff^r_m(M)$ the space of $C^r$-diffeomorphisms of $M$ that preserves the measure $m$. We say that a diffeomorphism $f\in \diff^1_m(M)$ is \emph{stably ergodic} if there exists  a $C^1$-neighborhood $\mathcal{U}$ of $f$ such that any diffeomorphism $g\in \mathcal{U} \cap \diff^2_m(M)$ is ergodic. 
(With this definition, $f$ could be stably ergodic but not ergodic. However, the generic stably ergodic diffeomorphism will be ergodic, since ergodicity is a $G_{\delta}$-property and $\diff^{2}_{m}(M)$ is dense in $\diff^{1}_{m}(M)$\cite{avila2010})  It is a natural problem to understand when such a property holds.

In 1969, Anosov proved in \cite{AN1969} that every uniformly hyperbolic (or Anosov), volume preserving $C^2$-diffeomorphism is ergodic. Since uniform hyperbolicity is a $C^1$-open property, it implies that uniformly hyperbolic systems are stably ergodic. This was the first example of a stably ergodic system. Anosov's proof is based on an argument introduced by Hopf in \cite{Hopf1939} to prove the ergodicity of the Liouville measure for the geodesic flow of compact surfaces with constant negative curvature. The key property needed to adapt Hopf argument for uniformly hyperbolic $C^2$-diffeomorphisms was the \emph{absolute continuity} of the stable and unstable foliations which was proved by Anosov and Sinai in \cite{AS1967}. It is not known if every Anosov, volume preserving $C^1$-diffeomorphism is ergodic. 

For many years this was the only known example of a stably ergodic system. It was only in 1994 that Grayson, Pugh and Shub proved in \cite{GPS1994} that the time one map of the geodesic flow of a surface of constant negative curvature is stably ergodic for the Liouville measure. The time one map of the flow mentioned above has a type of hyperbolicity called partial hyperbolicity.

A diffeomorphism $f:M\to M$ is \emph{partially hyperbolic} if there exists a $Df$-invariant decomposition of the tangent bundle $TM = E^{uu} \oplus E^c \oplus E^{ss}$ such that: the directions $E^{ss}$ and $E^{uu}$ are nontrivial; the direction $E^{ss}$ contracts uniformly and the direction $E^{uu}$ expands uniformly under the action of $Df$; the behavior of $E^c$ is dominated by the hyperbolic directions (see Section \ref{section.preliminaries}). 


Based on the proof of stable ergodicity in \cite{GPS1994}, Pugh and Shub conjectured in \cite{PS1995} that stable ergodicity is $C^r$-dense among volume preserving $C^r$-partially hyperbolic diffeomorphisms. Since then, this conjecture has motivated many works on this topic. Indeed, most works on stable ergodicity have been done in the partially hyperbolic scenario.

There are several cases in which Pugh-Shub's conjecture has been proved. In  \cite{HHU2008}, it was proved for $r= \infty$ among partially hyperbolic systems with one dimensional center; in \cite{HHTU11}, it was proved for $r=1$ among systems with two dimensional center; and in \cite{ACW2017} also for $r=1$ but among all partially hyperbolic systems. We remark that there are several other works regarding stable ergodicity for partially hyperbolic systems. 

The picture is much more incomplete outside the partially hyperbolic setting. In 2004, Tahzibi obtained in \cite{T2004} the first example of a stably ergodic system having no hyperbolic direction. There are some recent works by the authors in \cite{NRH2019, obata2019}, where they obtain results on stable ergodicity for non partially hyperbolic systems with some assumptions such as minimality of the strong unstable foliation, or chain hyperbolicity.

In 2012, the third author proposed the first author the following problem:
\begin{conjecture}\cite{NRH2019} Generically in $\diff^{1}_{m}(M)$, either all Lyapunov exponents are zero, or $f$ is stably ergodic and stably non-uniformly hyperbolic.  
\end{conjecture}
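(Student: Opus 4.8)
The natural strategy follows the Lyapunov dichotomy for $C^1$-generic conservative systems. First I would appeal to the Bochi--Viana theorem: for a $C^1$-generic $f\in\diff^1_m(M)$ and almost every $x$, the Oseledets splitting at $x$ is either dominated or trivial, i.e.\ all Lyapunov exponents at $x$ are equal; since the sum of the Lyapunov exponents of a volume-preserving map vanishes, in the latter case they are all zero. This isolates the first alternative of the conjecture, so it remains to treat a $C^1$-generic $f$ carrying a nontrivial (hence, in dimension $3$, codimension-one) dominated splitting $TM=E\oplus F$ over a full-measure set.

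The second step is to upgrade domination to \emph{stable} non-uniform hyperbolicity. Using the $C^1$ perturbation arguments of Bochi and of Baraviera--Bonatti, one removes the vanishing Lyapunov exponents inside the dominated bundles; $C^1$-generically all exponents are then nonzero, and passing to the finest dominated splitting one may separate the negative exponents from the positive ones. Since having a dominated splitting is a $C^1$-open property and these perturbations can be carried out inside that open set while keeping the exponents bounded away from zero on a set of definite measure, the resulting non-uniform hyperbolicity persists under $C^1$-perturbation; that is, $f$ is stably non-uniformly hyperbolic.

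The crux --- and the step I expect to be the genuine obstacle --- is to extract \emph{stable ergodicity} from stable non-uniform hyperbolicity when the system is not partially hyperbolic. The Hopf argument is then available only through Pesin theory: the stable and unstable manifolds form absolutely continuous laminations defined merely almost everywhere, whose local sizes degenerate along the non-uniformity, so the accessibility and juliennes machinery of Pugh--Shub and Avila--Crovisier--Wilkinson does not transfer directly. My plan would be to combine a $C^1$-generic minimality statement for the Pesin unstable lamination with a $C^1$-robust transitivity mechanism --- a blender-type device forcing any two large Pesin unstable plaques to be matched by stable holonomy --- and then run the Hopf argument relative to this mechanism, exactly as in the explicit constructions of the present paper and of \cite{NRH2019,obata2019}. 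The difficulty is to make such a mechanism \emph{generic}: one must show that the obstructions to stable ergodicity (nontrivial invariant sub-laminations of the Pesin unstable lamination, or a measurable partition of $M$ compatible with the Pesin structure) are non-generic in $\diff^1_m(M)$, which seems to require a perturbation tool adapted to non-uniformly hyperbolic laminations rather than to continuous invariant foliations. The present paper should be read as establishing the conjecture within every isotopy class of a partially hyperbolic diffeomorphism --- i.e.\ producing the examples --- which is the main evidence that such a generic mechanism exists.
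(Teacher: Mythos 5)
You have not proved the statement, but that is to be expected: in the paper this is stated as a \emph{Conjecture} (attributed to \cite{NRH2019}), not a theorem, and the paper offers no proof of it. What the paper actually proves (Theorems \ref{main.isotopy} and \ref{thm.maintheorem}) is the existence of stably ergodic, non--partially hyperbolic diffeomorphisms in every isotopy class of a volume preserving partially hyperbolic diffeomorphism of a $3$-manifold --- evidence for the conjecture, not the conjecture itself. Your proposal is honest about this: you correctly identify that the genuine obstacle is upgrading generic non-uniform hyperbolicity to \emph{stable} ergodicity without partial hyperbolicity, and you explicitly stop there. So the verdict is that there is a gap, and it is exactly the gap that makes this an open problem.

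Two more specific remarks. First, the dichotomy in your opening step is essentially Theorem \ref{thm.genericdichotomy} (\cite{JRH2012}, generalized in \cite{ACW16}) combined with Theorem \ref{bv05}; but note that Bochi--Viana alone does not give it, since a dominated Oseledets splitting can still carry zero exponents in one bundle --- the removal of zero exponents is the content of those cited theorems, not a corollary of domination. Second, your claim that ``the resulting non-uniform hyperbolicity persists under $C^1$-perturbation'' is unjustified: Lyapunov exponents are badly discontinuous in the $C^1$ topology, and \emph{stable} non-uniform hyperbolicity is itself part of the conjecture's conclusion, not a consequence of a generic one. In the paper's examples this stability is obtained only through the ergodicity criterion of Theorem \ref{criterion_hhtu} (which yields $\phc(p)\subset\nuh(f)$ robustly once $m(\phc^{\pm}(p_g))$ is controlled for all nearby $C^2$ maps, via Lemma \ref{lem.robustnessphminus} and Proposition \ref{prop.phcueentire}); no such mechanism is currently available for an arbitrary $C^1$-generic $f$ with nonzero exponents.
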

(The problem was stated for 3-manifolds, as Theorem \ref{thm.genericdichotomy} was only known to hold in this setting back then.)\par
We remark that all the known examples of stably ergodic systems that are not partially hyperbolic are isotopic to certain types of linear Anosov diffeomorphisms. These types of examples were first introduced by Bonatti and Viana in \cite{BV2000}, where they proved that they are robustly transitive, and later Tahzibi proved the stable ergodicity in \cite{T2004}. They are deformations of certain Anosov systems inside some fixed small balls in the manifold. In particular, all of such examples are on the torus.

In this work we give new examples of stably ergodic systems that are not partially hyperbolic on $3$-manifolds. Our main result is the following:


\begin{maintheorem}\label{main.isotopy} In the isotopy class of any volume preserving partially hyperbolic diffeomorphism of a $3$-dimensional manifold  
there is a stably ergodic diffeomorphism which is not partially hyperbolic.
\end{maintheorem}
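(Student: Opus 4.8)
The plan is to start from $f_0$, move within its isotopy class to a carefully chosen partially hyperbolic representative $f_1$, and then perform a Bonatti--Viana / Tahzibi type $C^0$-small deformation of $f_1$ supported in a small ball, destroying partial hyperbolicity in a $C^1$-robust way while keeping (robustly) the ingredients needed to run a Hopf-type ergodicity argument.

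\textbf{Step 1: a good representative in the isotopy class.} Using known results on the structure of partially hyperbolic diffeomorphisms of $3$-manifolds and on the (robust) minimality of their strong foliations, together with the $C^1$-density of accessibility and stable ergodicity \cite{ACW2017} and a perturbation of the center Lyapunov exponent à la Baraviera--Bonatti, I would first replace $f_0$ by an isotopic $f_1\in\diff^\infty_m(M)$ which is partially hyperbolic with one-dimensional center $E^c$, ergodic, and such that: (a) both strong foliations $\mathcal{W}^{ss}_{f_1}$ and $\mathcal{W}^{uu}_{f_1}$ are robustly minimal; and (b) the center exponent $\lambda^c$ is constant and nonzero $m$-almost everywhere. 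A short case analysis according to the model provided by the structure theory handles (a) in each case, and (b) follows from a volume-preserving perturbation, using ergodicity to make the resulting exponent constant. By symmetry (interchanging the roles of $E^{ss}$ and $E^{uu}$ everywhere below) we may assume $\lambda^c<0$.

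\textbf{Step 2: destroying partial hyperbolicity.} Pick a small ball $B$ disjoint from its first several iterates and a fixed point $q$ of $f_1$ in $B$. Let $h$ be a $C^\infty$, volume-preserving diffeomorphism supported in $B$, isotopic to the identity, acting near $q$ on the $(E^c,E^{ss})$-plane as a rotation by a fixed angle $\theta\in(0,\pi)$ and leaving $E^{uu}$ untouched, so that $g:=f_1\circ h\in\diff^\infty_m(M)$ has at $q$ one real eigenvalue $>1$ along $E^{uu}$ and a complex-conjugate pair of modulus $<1$ on $E^c(q)\oplus E^{ss}(q)$. Then $g$ is isotopic to $f_1\sim f_0$, it carries a dominated splitting $E^{uu}_g\oplus E^{cs}_g$ with $\dim E^{cs}_g=2$ and $E^{uu}_g$ uniformly expanding (the strong unstable cone field of $f_1$ is undisturbed), and $g$ is $C^1$-robustly \emph{not} partially hyperbolic: any $C^1$-nearby $g'$ has a periodic point near $q$ whose derivative has a non-real eigenvalue on the $2$-plane transverse to $E^{uu}$, so $E^{cs}_{g'}$ admits no dominated sub-splitting and the finest dominated splitting of $g'$ is still $E^{uu}_{g'}\oplus E^{cs}_{g'}$. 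This is the mechanism used in \cite{BV2000, T2004}.

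\textbf{Step 3: stable ergodicity.} This is the heart of the matter and the main obstacle. First, $\mathcal{W}^{uu}_g$ is still minimal, and robustly so, since the deformation does not affect the strong unstable cone field and $\mathcal{W}^{uu}_g$ is a small perturbation of the robustly minimal $\mathcal{W}^{uu}_{f_1}$. Second, I would show that every Gibbs $u$-state of $g$ (and of every $C^1$-nearby map) has both $E^{cs}$-Lyapunov exponents negative: outside $B$ one has $Dg=Df_1$, whose $E^{ss}$-exponent is strongly negative and whose center exponent equals $\lambda^c<0$, and since $B$ is small with the distortion of $h$ inside $B$ controlled, Birkhoff averages keep the two $E^{cs}$-exponents of every invariant measure negative, uniformly on a $C^1$-neighborhood. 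Hence $g$ is non-uniformly Anosov with $2$-dimensional non-uniformly contracting bundle; by Pesin theory the stable Pesin lamination is absolutely continuous and subordinate to an integral foliation of $E^{cs}_g$, and since $m$ is a Gibbs $u$-state it is an SRB measure. A Hopf argument along the minimal $\mathcal{W}^{uu}_g$ and the Pesin stable laminations then forces the ergodic components of $m$ to have essentially dense, overlapping basins, hence $m$ is ergodic; all the hypotheses being $C^1$-robust, $g$ is stably ergodic. The ergodicity criterion here is essentially the one in \cite{NRH2019, obata2019}; the delicate points are the robust minimality of $\mathcal{W}^{uu}_g$ under a perturbation that is only $C^0$-small, and the uniform negativity of the $E^{cs}$-exponents throughout a $C^1$-neighborhood, which is precisely what dictates the careful calibration of $B$ and $h$ in Step 2.
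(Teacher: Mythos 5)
Your Step 2 is essentially the deformation used in the paper (a volume preserving modification near a periodic point producing a complex contracting pair, hence robustly destroying partial hyperbolicity while keeping a dominated splitting $E^{uu}\oplus E^{cs}$ with $E^{uu}$ one-dimensional and uniformly expanding). But your Step 3 --- the actual proof of stable ergodicity --- follows Tahzibi's strategy (mostly contracting $E^{cs}$ for all Gibbs $u$-states, robustly, plus minimality of $\mathcal{W}^{uu}$ and a Hopf argument), and this is precisely the strategy that does \emph{not} apply to these examples; the paper says so explicitly in Section \ref{section.finalexamples}. The combinatorial/Birkhoff-average argument of \cite{BV2000,T2004} that you invoke requires $E^{cs}$ to be \emph{uniformly} contracted outside the perturbation ball $B$. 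In your setting the center bundle of $f_1$ is only non-uniformly contracted ($\lambda^c<0$ merely $m$-a.e.), so an orbit of $g$ that stays outside $B$ is an orbit of $f_1$ and can perfectly well have a non-negative exponent along $E^{cs}$ (for instance, periodic orbits with expanding center direction exist for such $f_1$ and cannot be removed; invariant measures, and in general Gibbs $u$-states, carried by such orbits violate your claim). "Since $B$ is small, Birkhoff averages keep both $E^{cs}$-exponents negative for every invariant measure, uniformly on a $C^1$-neighborhood" is therefore false as stated, and there is no obvious repair. Separately, Step 1 asserts that every isotopy class on every $3$-manifold contains a partially hyperbolic representative with \emph{both} strong foliations robustly minimal; this is not established (the classification of partially hyperbolic diffeomorphisms of $3$-manifolds is incomplete --- cf. the anomalous examples of \cite{BGP2016,BGHP2017}), and even granting it for $f_1$, robust minimality is a $C^1$-open condition while your $g$ is only $C^0$-close to $f_1$, so minimality of $\mathcal{W}^{uu}_g$ does not follow from "$\mathcal{W}^{uu}_g$ is a small perturbation of $\mathcal{W}^{uu}_{f_1}$".

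The paper avoids all of this by proving the separate density statement (Theorem \ref{thm.maintheorem}) for the open class $\mathcal{V}$, whose ergodicity mechanism is entirely different: since $E^{uu}$ is one-dimensional and dominating it is uniformly expanding, so almost every point has a positive exponent; the $C^1$-generic dichotomies (Lemma \ref{lemma.dichotomy}) then give nonuniform hyperbolicity and a hyperbolic periodic point $q_f$ with $\phc(q_f)\zeroeq M$, homoclinically related to $p_f$; the ergodicity criterion of \cite{HHTU11} (Theorem \ref{criterion_hhtu}) reduces stable ergodicity to showing that the "bad set" $\Gamma(g)=M\setminus\phc^+(p_g)$ is negligible for all $C^2$ maps $C^1$-close to $f$, and this is done by showing $\Gamma(g)$ is a compact, invariant, $su$-saturated, partially hyperbolic set (this is where condition (3) in the definition of $\mathcal{V}$, i.e., partial hyperbolicity of the maximal invariant set of $M\setminus U_f$, enters) whose nonemptiness would force a generically impossible strong connection $W^{ss}(q_1)\cap W^{uu}(q_2)\neq\emptyset$. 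No minimality and no mostly contracting property are needed. To use this route you would also need the care the paper takes in its construction: choose the periodic point with stable index $2$ and positive eigenvalues (via \cite{Ma1978} and a horseshoe), first equalize the two contracting eigenvalues with a divergence-free flow so that an arbitrarily $C^1$-small rotation creates the complex pair without breaking the invariant cone fields, and arrange the support of the perturbation inside $\bigcup_{x\in W^s_{loc}(p)}W^{uu}_\varepsilon(x)$ so that conditions (2) and (3) defining $\mathcal{V}$ hold.
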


We remark that if $f$ is a stably ergodic diffeomorphism of a $3$-manifold, then it preserves a dominated splitting of the two types of dominated splitting $TM = E^{uu} \oplus E^{cs}$ or $TM =  E^{cu} \oplus E^{ss}$, where the direcion $E^{ss}$ and $E^{uu}$ are uniformly contracting/expanding, respectively.

In order to prove Theorem \ref{main.isotopy}, we will prove the density of stable ergodicity inside a certain class of diffeomorphisms of $3$-dimensional manifolds. In what follows, we refer the reader to Section \ref{section.preliminaries} for the definitions of some basic dynamical objects that will appear.

 Let $\mathcal{V} \subset \diff^1_m(M)$ be the $C^1$-open set of diffeomorphisms $f$ satisfying:

\begin{enumerate}
 \item $f$ has a dominated splitting of the form $TM = E^{uu}_f \oplus E^{cs}_f$, where $E^{uu}$ is one-dimensional
 \item there exists a hyperbolic periodic point $p_{f}$ of stable index $2$ and an open set $U_{f}\ni p_{f}$ such that the local unstable manifold of each point in $U_{f}$ transversely intersect the local stable manifold of $p_{f}$. $U_{f}$ also satisfies:
\item the maximal invariant set $\Lambda_{f}=\bigcap_{n\in\Z}f^{n}(M\setminus U_{f})$ is partially hyperbolic. $\Lambda_{f}$ could be empty.
\end{enumerate}

\begin{maintheorem}\label{thm.maintheorem}
There exists a $C^1$-dense set $\mathcal{D}$ in $\mathcal{V}\cap \diff^2_m(M)$ such that any $f\in \mathcal{D}$ is stably ergodic. 
\end{maintheorem}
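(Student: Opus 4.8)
The plan is to show that every $f\in\mathcal{V}\cap\diff^2_m(M)$ can be $C^1$--approximated, inside $\mathcal{V}\cap\diff^2_m(M)$, by a diffeomorphism $g$ whose strong unstable foliation $\mathcal{F}^{uu}_g$ is \emph{robustly minimal} --- minimal for $g$ and for every nearby conservative diffeomorphism --- and then to deduce that any such $g$ is stably ergodic; $\mathcal{D}$ will be the collection of all the diffeomorphisms obtained this way. All the perturbations will be supported inside $U_f$, i.e.\ away from $\Lambda_f$; since transverse intersections persist and dominated splittings form a $C^1$--open condition, this keeps us inside $\mathcal{V}$ and in fact leaves $f|_{M\setminus U_f}$ and $\Lambda_f$ unchanged.

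First I would record the ergodicity mechanism. Suppose $g\in\mathcal{V}\cap\diff^2_m(M)$ has $\mathcal{F}^{uu}_g$ robustly minimal. Being $C^2$ with a dominated splitting $E^{uu}_g\oplus E^{cs}_g$, $\dim E^{uu}_g=1$, $E^{uu}_g$ uniformly expanding, $g$ has an absolutely continuous foliation $\mathcal{F}^{uu}_g$ (\cite{AS1967}), and the volume $m$ is a $u$--Gibbs state. At the index--$2$ periodic point $p_g$, a dimension count forces $E^{cs}_g$ to coincide with the (uniformly contracting) stable bundle; combining this with minimality of $\mathcal{F}^{uu}_g$ and the criteria for a \emph{mostly contracting} centre --- in the spirit of the theory of partially hyperbolic attractors with negative central exponents, and adapted to this non--partially--hyperbolic setting as in \cite{NRH2019,obata2019} --- I would show that every $u$--Gibbs state of $g$ has only negative Lyapunov exponents along $E^{cs}_g$. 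This is a $C^{1+}$--open property; together with minimality of $\mathcal{F}^{uu}_g$ it produces a unique SRB measure whose basin has full volume, and since $m$ is invariant this forces $m$ to be that SRB measure and to be ergodic (one also uses absolute continuity of $\mathcal{F}^{uu}_g$ and Pesin theory to close the Hopf chain of equalities). Since every hypothesis used is $C^1$--robust inside $\mathcal{V}$, the same conclusion holds for all $h\in\mathcal{V}\cap\diff^2_m(M)$ that are $C^1$--close to $g$, so $g$ is stably ergodic.

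The core of the proof is to produce robust minimality. Fix $f\in\mathcal{V}\cap\diff^2_m(M)$. By condition~(2) and the inclination ($\lambda$--)lemma, every point $x$ whose forward orbit meets $U_f$ satisfies $\overline{W^u(x)}\supseteq\overline{W^u(\mathcal{O}(p_f))}$; the exceptional points, whose forward orbit avoids $U_f$, lie in the partially hyperbolic set $\bigcap_{n\ge 0}f^{-n}(M\setminus U_f)$, which --- using volume preservation together with conditions~(2)--(3) --- I would argue is Lebesgue--null and is not an attractor, hence negligible for the ergodicity of $m$. So it suffices to arrange $\overline{W^u(\mathcal{O}(p_f))}=M$ in a $C^1$--robust way. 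For this I would carry out a $C^1$--small conservative perturbation supported near $\mathcal{O}(p_f)$ (inside $U_f$) that creates, near $p_f$, a blender $\mathcal{K}$ in the sense of Bonatti--D\'iaz: a hyperbolic basic set of unstable index $1$, tangent to $E^{uu}\oplus E^{cs}$, whose local unstable lamination $C^1$--robustly meets every disk tangent to a narrow cone around $E^{cs}$ lying in a fixed region $D_0\subset U_f$ --- the eigenvalue inequalities making it a blender (the $E^{uu}$--expansion dominating both $E^{cs}$--contractions along $\mathcal{K}$) being built in by the perturbation. Using condition~(2) once more, $W^u(\mathcal{O}(p_f))$ $C^1$--robustly crosses $D_0$ and is therefore captured by $\mathcal{K}$, and the robust covering property of the blender (used as blenders usually are to force robustly minimal strong foliations) propagates this to $\overline{W^u(\mathcal{O}(p_g))}=M$ for the perturbed map and all $C^1$--nearby conservative diffeomorphisms, i.e.\ $\mathcal{F}^{uu}$ is robustly (essentially) minimal.

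These perturbations are produced a priori only in $\diff^1_m(M)$ (via the conservative connecting lemma and the conservative pasting/perturbation lemmas), but robust minimality is a $C^1$--open property, so I would then apply the regularization of \cite{avila2010} to pass to a genuinely $C^\infty$ representative $g\in\mathcal{V}\cap\diff^2_m(M)$, $C^1$--close to $f$, still with $\mathcal{F}^{uu}_g$ robustly minimal; by the first step $g$ is stably ergodic, and letting $f$ range over $\mathcal{V}\cap\diff^2_m(M)$ yields the $C^1$--dense set $\mathcal{D}$. I expect the hard part to be precisely the blender construction: making an arbitrarily $C^1$--small \emph{volume preserving} perturbation that stays in $\mathcal{V}$ --- in particular supported inside $U_f$, so that $\Lambda_f$ is left untouched --- and that forces $\mathcal{F}^{uu}$ to be \emph{robustly}, not merely generically, minimal, together with the bookkeeping needed to check that $W^u(\mathcal{O}(p_f))$ genuinely activates the blender robustly. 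A secondary but delicate point, needed so that the essential minimality from the core step suffices to run the ergodicity mechanism, is controlling $\Lambda_f$: showing it is Lebesgue--null and supports no SRB measure of $g$, so that $m$--almost every point lies in the region where $\mathcal{F}^{uu}_g$ is minimal and Pesin theory applies.
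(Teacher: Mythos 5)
Your proposal takes a genuinely different route (robust minimality of $\mathcal{F}^{uu}$ via a conservative blender, then $u$-Gibbs/SRB theory), but its central step has a real gap. You claim that minimality of $\mathcal{F}^{uu}_g$ together with the existence of the index-$2$ point $p_g$ lets you show that \emph{every} $u$-Gibbs state has negative exponents along $E^{cs}_g$. That implication is false: the time-one map of an Anosov flow has a minimal strong unstable foliation and the volume is a $u$-Gibbs state with a zero center exponent, and the same happens for $g\times Id$ on $\T^2\times S^1$. Note also that in the present setting $m$ itself is a $u$-Gibbs state (it is smooth and invariant, and $E^{uu}$ is uniformly expanding with absolutely continuous leaves for $C^2$ maps), so your claim amounts to asserting that $m$-a.e.\ point is nonuniformly hyperbolic --- which is the hardest part of the theorem, not a consequence of minimality. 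The class $\mathcal{V}$ was designed precisely so that $E^{cs}$ is \emph{not} uniformly contracted outside a fixed region and the mostly contracting mechanism of Tahzibi does not apply (the paper says this explicitly in its final section). The paper obtains the needed negative exponents only $C^1$-generically, from the dichotomies of Bochi--Viana, Avila--Bochi and Theorem~\ref{thm.genericdichotomy}, using that the one-dimensional $E^{uu}$ rules out the all-exponents-zero alternative; it then concludes via the Pesin-homoclinic-class criterion of Theorem~\ref{criterion_hhtu}, not via uniqueness of SRB measures.

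Two further points. First, nothing in the definition of $\mathcal{V}$ makes $\Lambda_f$ (or $\bigcap_{n\ge0}f^{-n}(M\setminus U_f)$) Lebesgue-null; condition (3) only says it is partially hyperbolic, and it may well have positive volume. The paper does not need this: it works with the bad set $\Gamma(g)=M\setminus\phc^+(p_g)$ and shows $m(\Gamma(g))=0$ by proving $\Gamma(g)$ is compact, invariant, partially hyperbolic and $su$-saturated, so that positive measure would force, via the results of \cite{JRH2012}, a strong connection $W^{ss}(q_1)\cap W^{uu}(q_2)\neq\emptyset$ between periodic orbits, which is excluded generically. Second, full robust minimality of $\mathcal{F}^{uu}$ is both more than is needed ($\phc^+(p_g)\circeq M$ suffices, and is what actually follows from the above) and not established by your sketch: building a conservative blender that is robustly activated by \emph{every} leaf, including leaves of points whose orbits never visit $U_f$, is left entirely open. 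If you replace the $u$-Gibbs/mostly-contracting mechanism by the HHTU criterion, supply the negative center exponents via the generic dichotomy, and replace minimality by the measure-zero bad-set argument, you essentially recover the paper's proof.
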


 In the setting of Theorem \ref{thm.maintheorem} there is only one hyperbolic direction and the usual strategy that is applied for partially hyperbolic systems does not apply.

This work is organized as follows. In Section \ref{section.preliminaries} we recall all the results we will use in the proof of Theorem \ref{thm.maintheorem}. Then in Section \ref{section.proofmainthm} we prove Theorem \ref{thm.maintheorem}. In Section \ref{section.thmA} we prove Theorem \ref{main.isotopy} by explaining how to deform any partially hyperbolic system to get in the setting of Theorem \ref{thm.maintheorem}. At last, in Section \ref{section.finalexamples} we make some final remarks and make explicit new classes of examples which were not known before.

\section{Preliminaries}\label{section.preliminaries}

We will say that a diffeomorphism $f\in\diff^{1}_{m}(M)$ admits a {\em dominated splitting} $TM=F\oplus E$ if both $E$ and $F$ are $Df$-invariant subbundles, and there exists a Riemannian metric such that for every $x\in M$ and $v_{F}\in F$, $v_{E}\in E$ unit vectors, we have
$$||Df(x)v_{E}||\leq \frac{1}{2}||Df(x)v_{F}||$$
\begin{remark}\label{remark.unidimensional}
It can easily be shown that if $\dim F=1$, then $F$ is a uniformly expanding bundle.  
\end{remark}

A point $p$ is periodic if there exists $l \in \N$ such that $f^l(p) = p$. The smallest natural number that verifies $f^l(p)=p$ is called the period of $p$ and we denote it by ${\rm per}(p)$. A periodic point is \emph{hyperbolic} if all the eigenvalues of $Df^{{\rm per}(p)}(p): T_pM \to T_pM$ have absolute value different than $1$. The stable manifold of $p$ is defined by
\[
W^s(p,f):= \{ x\in M: \lim_{n\to +\infty} d(f^n(x), f^n(p) ) = 0\}.
\]
By the Stable manifold Theorem, if $p$ is a hyperbolic periodic point then $W^s(p,f)$ is a $C^1$-immersed submanifold. One defines the unstable manifold of $p$ in a similar way but considering past iterates of $f$, and denote it by $W^u(p,f)$. We write $\pper_H(f)$ as the set of hyperbolic periodic points of $f$.

We say that a number $\lambda \in \R$ is a Lyapunov exponent for the point $x \in M$ and vector $v\in T_xM$ if  
\[
\lambda (x,v):= \lim_{n\to +\infty} \frac{1}{n} \log \|Df^n(x) v\| = \lambda.
\]

Oseldets' theorem states that for $m$-almost every point $x$, there exist $k(x)$ different Lyapunov exponents $\lambda_1(x)> \cdots > \lambda_{k(x)}(x)$, where $1\leq k(x) \leq \mathrm{dim}(M)$, and a $Df$-invariant decomposition (called the \emph{Oseledets decomposition}) 
\[
T_xM = E^1_x \oplus \cdots \oplus E^{k(x)}_x
\]
such that $\lambda_i(x) = \lambda(x,v_i)$, for any non-zero vector $v_i \in E^i_x$ with $i=1, \cdots, k(x)$. For $m$-almost every point $x$ we consider the decomposition
\[
T_xM = E^+_x \oplus E^0_x \oplus E^-_x,
\]
such that $\lambda(x,v_-)<0$ for $v_-\in E^-_x/\{0\}$; $\lambda(x,v_0) = 0$ for $v_0\in E^0_x/\{0\}$; and $\lambda(x,v_+) >0 $ for  $v_+ \in E^+_x/\{0\}$. 

Consider the sets 
\[\arraycolsep=1.2pt\def\arraystretch{1.2}
\begin{array}{rcl}
W^-(x,f) & := & \{ y\in M: \limsup_{n\to +\infty} \frac{1}{n} \log d(f^n(x), f^n(y)) <0 \},\\
W^+(x,f) & := & \{ y\in M: \limsup_{n\to + \infty} \frac{1}{n} \log d(f^{-n}(x), f^{-n}(y) <0\}.
\end{array}
\]
In \cite{Pesin1977}, Pesin proved that if $f$ is $C^2$ then for $m$-almost every $x$ the sets $W^-(x)$ and $W^+(x)$ are $C^1$-immersed submanifolds of $M$.

Following \cite{HHTU11}, given a hyperbolic periodic point $p\in M$, we define its {\em stable Pesin homoclinic class} by 
\begin{equation}\label{eq.phc.-}
\phc^{-}(p)=\{x: W^{-}(x,f)\transv W^{u}(o(p),f)\ne\emptyset\} 
\end{equation}
where $W^{u}(o(p))$ is the union of the unstable manifolds of $f^{k}(p)$, for all $k=0,\dots,{\rm per}(p)-1$. $\phc^{-}(p)$ is invariant and saturated by $W^{-}$-leaves. Analogously, we define 
\begin{equation}\label{eq.phc.+}
\phc^{+}(p)=\{x: W^{+}(x,f)\transv W^{s}(o(p),f)\ne\emptyset\} 
\end{equation}
Let us denote by $\nuh(f)$ the set of points $x\in M$ such that all Lyapunov exponents in $x$ are different from zero.

\begin{theorem}[Theorem A, \cite{HHTU11}]\label{criterion_hhtu} Let $f:M \rightarrow M$ be a $C^2$-diffeomorphism over a closed connected Riemannian manifold $M$, let $m$ be a smooth invariant measure and $p \in \pper_H(f)$. If $m(\phc^{+}(p))>0$ and $m(\phc^{-}(p))>0$, then
\begin{enumerate}
	\item $\phc^{+}(p) \circeq \phc^{-}(p) \circeq \phc(p)$.
	\item $m|\phc(p)$ is ergodic.
	\item $\phc(p) \subset \nuh(f)$.
\end{enumerate}
\end{theorem}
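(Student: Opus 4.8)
This is a Hopf-type ergodicity criterion, so the plan is to run Hopf's argument inside Pesin theory. I would first isolate the two inputs from Pesin theory for the $C^{2}$ map $f$: on the full-measure set of Lyapunov-regular points, the families of stable Pesin manifolds $\{W^{-}(x)\}$ and unstable Pesin manifolds $\{W^{+}(x)\}$ are \emph{absolutely continuous} laminations on Pesin blocks (the leaf-conditional measures of $m$ are equivalent to leaf volume, and the stable/unstable holonomies between transversals are absolutely continuous), and $\dim W^{-}(x)$ (resp.\ $\dim W^{+}(x)$) equals the number of negative (resp.\ positive) Lyapunov exponents at $x$, counted with multiplicity.

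Next, the Hopf argument. Fix a continuous $\varphi\colon M\to\R$ and let $\varphi^{+},\varphi^{-}$ be its forward and backward Birkhoff averages; both are $f$-invariant and defined on a full-measure set, with $\varphi^{+}=\varphi^{-}$ $m$-a.e.\ by Birkhoff. Since $y\in W^{-}(x)$ forces $d(f^{n}x,f^{n}y)\to 0$, we get $\varphi^{+}(y)=\varphi^{+}(x)$ wherever both exist, and feeding this into absolute continuity of the stable lamination shows that $\varphi^{+}$ coincides $m$-a.e.\ with a function constant along each stable Pesin leaf; symmetrically, $\varphi^{-}$ coincides $m$-a.e.\ with a function constant along each unstable Pesin leaf. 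Moreover, by continuity of $\varphi$ alone, $\varphi^{-}\equiv c:=\frac{1}{{\rm per}(p)}\sum_{j=0}^{{\rm per}(p)-1}\varphi(f^{j}p)$ on all of $W^{u}(o(p))$, and $\varphi^{+}\equiv c$ on all of $W^{s}(o(p))$, because backward (resp.\ forward) orbits of such points converge to the orbit $o(p)$.

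The heart of the proof is the following saturation statement: since $m(\phc^{-}(p))>0$, the set $\phc^{-}(p)$ --- which is $W^{-}$-saturated by definition --- is in fact also saturated by unstable Pesin leaves modulo $m$-null sets, and symmetrically, using $m(\phc^{+}(p))>0$, the set $\phc^{+}(p)$ is $W^{-}$-saturated modulo $m$-null sets. To see this for a generic $x\in\phc^{-}(p)$, one picks a transverse intersection point $z\in W^{-}(x)\transv W^{u}(o(p))$ and works in a small Pesin-block neighbourhood of $z$: because $W^{u}(o(p))$ is a genuine immersed unstable manifold of a hyperbolic periodic orbit it has uniform local size near $z$ and meets the stable Pesin lamination in a locally-product fashion, so a Fubini-type argument using absolute continuity transports the property ``the stable Pesin leaf meets $W^{u}(o(p))$'' to $m$-a.e.\ point of $W^{+}(x)$ near $x$; iterating and using $f$-invariance gives the claim. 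Once $\phc^{\pm}(p)$ are both saturated (mod $m$-null) by both laminations and contain respectively $W^{u}(o(p))$ and $W^{s}(o(p))$, along which $\varphi^{\mp}\equiv c$, the usual $su$-path argument --- every point is joined to $o(p)$ by a chain of alternating stable and unstable Pesin leaves routed along $W^{u}(o(p))$ and $W^{s}(o(p))$ --- together with absolute continuity forces $\varphi^{+}=\varphi^{-}=c$ $m$-a.e.\ on $\phc^{+}(p)\cup\phc^{-}(p)$. In particular the two sets agree modulo $m$-null with a common set $\phc(p)$, giving (1), and since $c$ is independent of $x$ and continuous functions are dense in $L^{1}(m|_{\phc(p)})$, every $f$-invariant integrable function is $m$-a.e.\ constant on $\phc(p)$, i.e.\ $m|_{\phc(p)}$ is ergodic, giving (2). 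Finally, for (3): for $m$-a.e.\ $x\in\phc(p)$ we have $x\in\phc^{-}(p)\cap\phc^{+}(p)$ by (1), so $W^{-}(x)\transv W^{u}(o(p))$ and $W^{+}(x)\transv W^{s}(o(p))$ in $M$; transversality yields $\dim W^{-}(x)\ge\dim M-\dim W^{u}(o(p))$ and $\dim W^{+}(x)\ge\dim M-\dim W^{s}(o(p))$, so $\dim W^{-}(x)+\dim W^{+}(x)\ge\dim M$, while the left-hand side is the number of nonzero Lyapunov exponents at $x$, which is at most $\dim M$; hence equality holds and $x$ has no zero exponent, i.e.\ $x\in\nuh(f)$.

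The step I expect to be the main obstacle is the $W^{+}$-saturation of $\phc^{-}(p)$ (and with it the identification $\phc^{+}(p)\circeq\phc^{-}(p)$): since $W^{u}(o(p))$ is typically an $m$-null set, one cannot argue naively with ``$m$-a.e.\ point of $W^{u}(o(p))$'' and must instead push the almost-everywhere statements through the absolutely continuous holonomies of the Pesin laminations, carefully on Pesin blocks where leaf sizes are uniformly bounded below; controlling how the non-uniform Pesin stable manifolds interact with the uniform unstable manifold of the periodic orbit near a transverse intersection is the real technical content.
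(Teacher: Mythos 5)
First, a point of comparison: this statement is not proved in the paper at all --- it is quoted verbatim as Theorem A of \cite{HHTU11} and used as a black box, so there is no internal proof to measure your attempt against. Judged on its own, your outline does follow the strategy of the original proof in \cite{HHTU11}: a Hopf argument run through Pesin stable/unstable manifolds and their absolute continuity, with the orbit $o(p)$ acting as the hub that pins every Birkhoff average to the common value $c$. Your dimension count for item (3) (transversality forces $\dim W^{-}(x)+\dim W^{+}(x)\ge \dim M$, while the number of nonzero exponents is at most $\dim M$, hence there are no zero exponents) is correct and is essentially how the nonuniform hyperbolicity of the class is obtained there.

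The problem is that the central step --- the essential $W^{+}$-saturation of $\phc^{-}(p)$ and the identity $\phc^{+}(p)\circeq\phc^{-}(p)$ --- is asserted rather than proved, and the sketch you offer for it does not work as written. Two concrete issues. First, you propose to work ``in a small Pesin-block neighbourhood of $z$'', where $z\in W^{-}(x)\transv W^{u}(o(p))$; but nothing guarantees that any Pesin block carrying positive measure, or any stable lamination of uniformly sized leaves, lives near $z$: the mass of $\phc^{-}(p)$ and its intersection points with $W^{u}(o(p))$ may sit in entirely different regions of $M$. The argument has to be anchored at a Lebesgue density point of $\phc^{-}(p)$ inside a Pesin block, where uniform leaf size and continuity of the lamination give a local product structure, and $W^{u}(o(p))$ must be brought into that chart by iteration (an inclination-lemma type step), which your sketch omits. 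Second, and more fundamentally, your Hopf chain $\varphi^{+}(x)=\varphi^{+}(z)$, $\varphi^{-}(z)=c$ still needs the link $\varphi^{+}(z)=\varphi^{-}(z)$ at points $z$ of the $m$-null submanifold $W^{u}(o(p))$; ``Fubini plus absolute continuity'' does not deliver this directly, since full-$m$-measure statements transfer only to leaf-volume-a.e.\ points of leaves of an absolutely continuous lamination carrying positive measure, and $W^{u}(o(p))$ is a single distinguished null transversal. Circumventing exactly this is the technical core of \cite{HHTU11}. So: right strategy, correctly identified obstacle --- but the obstacle is where the proof lives, and you leave it open.
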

The notation $A\zeroeq B$ means $m(A\triangle B)=0$, where $A\triangle B$ is the symmetric difference between the sets $A$ and $B$. Recall that a residual subset of $\diff^1_m(M)$ is a set that contains a $C^1$-dense $G_{\delta}$ subset of $\diff^1_m(M)$. 

\begin{theorem}\cite{AB2012}\label{AB.periodic} There exists a residual subset $\mathcal{R}$ of $\diff^{1}_{m}(M)$ such that for $f\in \mathcal{R}$, if $m(\nuh(f))>0$ then there exists a hyperbolic periodic point $q$ such that
$$\phc(q)\circeq \nuh(f)$$ 
and $m|_{\nuh(f)}$ is $f$-ergodic.
\end{theorem}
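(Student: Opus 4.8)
The plan is to combine the ergodicity criterion of \thmref{criterion_hhtu} with Katok's periodic-orbit approximation of nonuniformly hyperbolic measures and a $C^1$-perturbation argument. All Pesin-theoretic steps will be carried out for $f\in\diff^2_m(M)$; the passage to a $C^1$-residual set is made at the end by a Baire-category argument.

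So let $f\in\diff^2_m(M)$ with $m(\nuh(f))>0$. By Pesin's theory of ergodic components \cite{Pesin1977}, $\nuh(f)$ decomposes modulo $m$ into at most countably many $f$-invariant pieces $C_1,C_2,\dots$, each of positive measure, with $m|_{C_j}$ ergodic and with all its Lyapunov exponents nonzero, $s_j$ of them negative. Fix $j$ and write $C=C_j$, $s=s_j$. By Katok's closing lemma there is a hyperbolic periodic point $q$ of stable index $s$ whose orbit shadows $m|_C$-typical orbits and whose local invariant manifolds have uniformly bounded size; using the inclination lemma and the absolute continuity of the Pesin laminations, one checks that a positive-measure set of Pesin-regular Lebesgue density points $x\in C$ satisfies both $W^-(x,f)\transv W^u(o(q),f)\neq\emptyset$ and $W^+(x,f)\transv W^s(o(q),f)\neq\emptyset$. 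Hence $m(\phc^-(q)\cap C)>0$ and $m(\phc^+(q)\cap C)>0$; since $\phc^\pm(q)$ are $f$-invariant and $m|_C$ is ergodic, this forces $m(\phc^-(q))>0$ and $m(\phc^+(q))>0$. Then \thmref{criterion_hhtu} gives $\phc^+(q)\zeroeq\phc^-(q)\zeroeq\phc(q)$, the ergodicity of $m|_{\phc(q)}$, and $\phc(q)\subset\nuh(f)$; since $C\subset\phc(q)$ mod $m$ and $m|_{\phc(q)}$ is ergodic while $C$ is invariant, $\phc(q)\zeroeq C$. Writing $q_j$ for the periodic point attached to $C_j$ in this way, we conclude that \emph{for every $C^2$ diffeomorphism} one has $\nuh(f)\zeroeq\bigcup_j\phc(q_j)$, each $\phc(q_j)$ being an ergodic Pesin homoclinic class.

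The next ingredient is that homoclinically related periodic points share their Pesin homoclinic class: if $q,q'$ are hyperbolic periodic points with $W^u(o(q))\transv W^s(o(q'))\neq\emptyset$ and $W^u(o(q'))\transv W^s(o(q))\neq\emptyset$ (which in particular forces equal indices), the inclination lemma makes $W^u(o(q))$ accumulate on $W^u(o(q'))$ in the $C^1$ topology over compact pieces and conversely, so openness of transversality gives $\phc^-(q)=\phc^-(q')$ and $\phc^+(q)=\phc^+(q')$, hence $\phc(q)\zeroeq\phc(q')$.

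It remains to produce a residual $\mathcal R\subset\diff^1_m(M)$ on which all the positive-measure classes of the decomposition coincide modulo $m$; for such $f$ one then has $\nuh(f)\zeroeq\phc(q_1)$, and $m|_{\nuh(f)}$ is ergodic by the previous steps. Two points must be arranged generically. First, only one index $s$ may occur with positive measure --- otherwise one has two ergodic components of $\nuh(f)$ of different index, which the merging lemma cannot relate; generically this is excluded by a connecting-lemma plus Ma\~n\'e-type perturbation of the heteroclinic cycle one can build between periodic points of the two indices. Second, within a fixed index, any two positive-measure classes can be made homoclinically related by a $C^1$-small conservative perturbation (the connecting lemma for pseudo-orbits of Bonatti--Crovisier), and the resulting transverse heteroclinic intersections persist on a $C^1$-open set. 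I expect the main obstacle to be turning this into a genuine Baire-category statement, since ``$m(\phc(q))>0$'' is not a $C^1$-open condition and $\nuh(f)$ does not vary semicontinuously with $f$; one has to work over the residual set on which, for each rational $\varepsilon>0$, the number of ergodic components of $m|_{\nuh(f)}$ of measure $\geq\varepsilon$ is locally constant, and on which the Bonatti--Crovisier and Katok approximations are available, and then intersect, over a fixed countable family of model periodic data, the $C^1$-open-and-dense sets of diffeomorphisms for which the relevant continuations are hyperbolic and homoclinically related.
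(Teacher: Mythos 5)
First, a point of reference: the paper itself gives no proof of this statement --- it is quoted verbatim from \cite{AB2012} --- so your sketch can only be measured against what such a proof must accomplish. The $C^2$ part of your outline is essentially fine: for $g\in\diff^2_m(M)$ with $m(\nuh(g))>0$, Pesin's ergodic-component theorem, Katok's closing lemma and Theorem \ref{criterion_hhtu} do produce countably many ergodic Pesin homoclinic classes covering $\nuh(g)$ mod $m$, and homoclinically related periodic points share their class. The genuine gap is the passage to a $C^1$-residual set, and it is not a finishing touch --- it is the actual content of the theorem. The conclusion concerns the generic $f$ itself, which is typically not $C^2$; for such $f$ none of the tools of your first paragraph (Pesin stable manifolds at a.e.\ point, absolute continuity, the decomposition of $\nuh(f)$ into countably many positive-measure ergodic components, the HHTU criterion) is available. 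Consequently the objects your Baire argument is supposed to stabilize --- ``the ergodic components of $m|_{\nuh(f)}$ of measure $\ge\eps$'' and their attached periodic points --- are not even defined for the maps you want to put in $\mathcal R$, let alone semicontinuous in $f$. Moreover, the perturbative surgery (connecting lemma, Ma\~n\'e-type perturbations) creates homoclinic relations only for the perturbed map $g$, and since neither $f\mapsto\nuh(f)$ nor $f\mapsto m(\phc(q_f))$ enjoys any known semicontinuity, and ``$m(\phc(q))>0$'' is not a $C^1$-open condition, density of ``good'' $C^2$ maps does not upgrade to residuality in the way your last paragraph hopes. You name this obstacle yourself, but naming it does not fill it: your proposed residual set (local constancy of the number of components of measure $\ge\eps$) is not known to be residual, or even well defined off the $C^2$ maps.

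What is missing, concretely, is a quantitative statement tying the $C^1$-generic $f$ to its $C^2$ neighbours, which is how \cite{AB2012} actually proceeds: using Avila's regularization theorem ($\diff^2_m(M)$ is $C^1$-dense in $\diff^1_m(M)$), one shows that at a $C^1$-generic $f$ the measure of the Pesin homoclinic class of the continuation $p_g$ of a suitable hyperbolic periodic point is uniformly large for \emph{all} $C^2$ maps $g$ in a $C^1$-neighborhood of $f$ --- this is exactly the role of the paper's Lemma \ref{lem.robustnessphminus}, which is \cite[Lemma 5.1]{AB2012} --- and the properties $\phc(q)\zeroeq\nuh(f)$ and ergodicity of $m|_{\nuh(f)}$ for $f$ itself are then obtained by semicontinuity/limit arguments at generic (continuity) points, not by merging the components of $f$ after a perturbation. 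Without an ingredient of this type, your outline proves the conclusion only for a $C^1$-dense subset of $\diff^2_m(M)$, which is strictly weaker than the quoted theorem.
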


\begin{theorem} \cite{JRH2012}\label{thm.genericdichotomy}  Let $M$ be a closed connected manifold of dimension 3, then there exists $\mathcal{R} \subset \diff^{1}_m (M)$ a residual set such that every $f \in \mathcal{R}$ satisfies one of the following alternatives:
\begin{itemize}
    \item All Lyapunov exponents of $f$ vanish almost everywhere, or
    \item $\nuh(f)\circeq M$
\end{itemize}
\end{theorem}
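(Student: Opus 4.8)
\textbf{Proof proposal for Theorem~\ref{thm.genericdichotomy}.}

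The plan is to produce the residual set $\mathcal{R}$ by intersecting three generic sets and arguing a dichotomy pointwise on this intersection. First, take $\mathcal{R}_1$ to be the residual set of Theorem~\ref{AB.periodic}: for $f\in\mathcal{R}_1$, if $m(\nuh(f))>0$ then $\nuh(f)\circeq\phc(q)$ for some hyperbolic periodic point $q$, and $m|_{\nuh(f)}$ is ergodic. The point of invoking this is that on the relevant part of the manifold there is a single ergodic component with a ``coherent'' hyperbolic index coming from $q$; in dimension $3$ the periodic point $q$ has stable index $1$ or $2$, so on $\nuh(f)$ the Oseledets splitting has signature $(+,-)$ (one positive and one negative exponent, counted with multiplicity), i.e. $E^0=0$ almost everywhere on $\nuh(f)$.

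Second, I would use a $C^1$-generic continuity/robustness argument to rule out the intermediate situation $0<m(\nuh(f))<1$. The idea is that the set $Z(f):=M\setminus\nuh(f)$, where at least one exponent vanishes, is $f$-invariant; if $m(Z(f))>0$ as well, then both $\nuh(f)$ and $Z(f)$ are invariant sets of positive measure, so $f$ is not ergodic. But by a theorem of Avila (ergodicity is generic in $\diff^1_m(M)$, using density of $\diff^2_m$ and the fact that ergodicity is $G_\delta$, as recalled in the introduction via \cite{avila2010}) there is a residual set $\mathcal{R}_2$ of ergodic diffeomorphisms; for $f\in\mathcal{R}_1\cap\mathcal{R}_2$ we therefore get $m(\nuh(f))\in\{0,1\}$. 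This already gives the dichotomy ``$m(\nuh(f))=0$ or $m(\nuh(f))=1$'', and when $m(\nuh(f))=1$ we have $\nuh(f)\circeq M$, which is the second alternative.

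It remains to show that in the first case, $m(\nuh(f))=0$, one actually has \emph{all} Lyapunov exponents vanishing a.e., not merely ``at least one exponent vanishes a.e.''. Here is where dimension $3$ is essential and where the main obstacle lies. On the invariant set where exactly one exponent vanishes the other two must be opposite in sign (their sum need not be zero pointwise, but by invariance of volume the integral of the sum of all three exponents is zero, and one of the three is identically zero on this set); so the sum of the remaining two integrates to zero. The danger is a ``mixed'' set where the positive exponent dominates on part and the negative on another part. To exclude this I would again lean on $C^1$-genericity: by results on the $C^1$-generic structure of the Oseledets splitting (Bochi--Viana theory, and the fact that for $C^1$-generic volume-preserving diffeomorphisms the Oseledets splitting is either dominated or the exponents vanish on the corresponding block), a $C^1$-generic $f$ either admits a dominated splitting on the set where exponents are nonzero, or those exponents are zero; in dimension $3$ a dominated splitting with a nonzero exponent block forces, via Remark~\ref{remark.unidimensional} and Theorem~\ref{criterion_hhtu}-type arguments, that $m(\nuh(f))>0$, contradicting $m(\nuh(f))=0$. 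Hence on a third residual set $\mathcal{R}_3$, $m(\nuh(f))=0$ implies all exponents vanish a.e. Setting $\mathcal{R}=\mathcal{R}_1\cap\mathcal{R}_2\cap\mathcal{R}_3$ finishes the proof.

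The hard part will be the last paragraph: converting ``at least one exponent vanishes a.e.'' into ``all exponents vanish a.e.'' using only $C^1$-generic tools. The clean way is to invoke the Bochi--Viana dichotomy for the derivative cocycle in dimension $3$ — generically either the cocycle is (partially) hyperbolic on the nonzero part with a dominated splitting, or the relevant Lyapunov exponents are zero — together with the observation that a genuine dominated splitting with nonzero exponents cannot coexist with $m(\nuh(f))=0$. I would present this via a lemma isolating the $3$-dimensional linear-algebra fact (if two of three volume-preserving exponents are forced to be negatives of each other on an invariant positive-measure set and the splitting is dominated there, then that set lies in $\nuh(f)$) and then quote the generic domination result. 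Care is needed to state exactly which genericity results are used and to make sure the three residual sets are genuinely residual in $\diff^1_m(M)$ rather than in some $C^2$ subspace.
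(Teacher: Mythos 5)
This theorem is not proved in the paper at all: it is quoted verbatim from \cite{JRH2012}, and its proof there is a substantial piece of work. Your proposal has two genuine gaps, each of which is fatal.

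First, your set $\mathcal{R}_2$ does not exist as claimed. Ergodicity is \emph{not} known to be $C^1$-generic in $\diff^1_m(M)$; this is essentially an open problem, and in fact the theorem you are trying to prove (together with its higher-dimensional extension in \cite{ACW16}) is the closest known substitute for it. What Avila's regularization theorem gives is $C^1$-density of $\diff^2_m(M)$ (or $\diff^\infty_m$), and the $G_\delta$ nature of ergodicity then only yields statements \emph{inside} open sets of stably ergodic diffeomorphisms (which is exactly how the introduction of this paper uses it); it does not produce a residual set of ergodic maps in all of $\diff^1_m(M)$. So your mechanism for excluding $0<m(\nuh(f))<1$ collapses, and with it the reduction to the two cases $m(\nuh(f))\in\{0,1\}$.

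Second, the last step --- upgrading ``at least one exponent vanishes a.e.'' to ``all exponents vanish a.e.'' via the Bochi--Viana alternative --- does not go through. The dangerous configuration is a positive-measure invariant set on which the exponents have signature $(+,0,-)$ with a dominated Oseledets splitting, say $E^{+}\oplus E^{0}\oplus E^{-}$ or $E^{+}\oplus E^{0-}$. Such a set is perfectly compatible with the conclusion of Theorem \ref{bv05} (the splitting there is dominated, not trivial) and it contributes nothing to $\nuh(f)$, so there is no contradiction with $m(\nuh(f))=0$; your claim that ``a genuine dominated splitting with nonzero exponents cannot coexist with $m(\nuh(f))=0$'' begs the question, because on this set one of the exponents is zero. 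Eliminating precisely this mixed case generically is the actual content of \cite{JRH2012}: it requires perturbative arguments to make the center exponent nonzero, constructions of periodic orbits/blenders, and the ergodicity criterion of \cite{HHTU11} applied to $C^2$ approximations together with semicontinuity arguments --- none of which can be replaced by merely intersecting the generic structure theorems you quote. (Your use of Theorem \ref{AB.periodic} in the first paragraph is harmless but also adds nothing: $E^0=0$ on $\nuh(f)$ is true by definition of $\nuh(f)$.)
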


In \cite{ACW16}, the authors generalize Theorem \ref{thm.genericdichotomy} to arbitrary dimensional manifolds. 

\begin{theorem}\cite{BV2005}\label{bv05} There exists a residual set $\mathcal{R} \subset \diff^{1}_m (M)$ such that, for each $f\in \mathcal{R}$ and $m$-almost every $x\in M$, the Oseledets' splitting of $f$ is either trivial or dominated at $x$
\end{theorem}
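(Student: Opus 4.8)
This is the Bochi--Viana theorem, and the plan is to follow the Ma\~n\'e--Bochi scheme: realize the integrated Lyapunov spectrum as an upper semicontinuous function of the dynamics, and then show that any failure of the dichotomy would let one perturb that spectrum strictly downward, which is impossible at a point of continuity. For $1\le j\le\dim M$ put
\[
LE_j(g):=\lim_{n\to\infty}\frac1n\int_M\log\|\Lambda^jDg^n(x)\|\,dm(x)=\inf_{n\ge1}\frac1n\int_M\log\|\Lambda^jDg^n(x)\|\,dm(x),
\]
the second equality being Fekete's lemma applied to the subadditive sequence $a_n(g)=\int_M\log\|\Lambda^jDg^n\|\,dm$ (subadditivity follows from $m$-invariance together with $\|\Lambda^j(AB)\|\le\|\Lambda^jA\|\,\|\Lambda^jB\|$). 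For fixed $n$ the function $g\mapsto a_n(g)$ is continuous in the $C^1$ topology, so each $LE_j$ is an infimum of continuous functions, hence upper semicontinuous on the Baire space $\diff^1_m(M)$; therefore the set $\mathcal R$ of common points of continuity of $LE_1,\dots,LE_{\dim M}$ is residual. By Oseledets' theorem $LE_j(g)=\int_M\big(\lambda_1(x)+\dots+\lambda_j(x)\big)\,dm(x)$, exponents listed decreasingly and with multiplicity, so continuity of all the $LE_j$ at $f$ controls the whole integrated spectrum under $C^1$-small perturbations.

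The technical core is a volume preserving perturbation lemma. Suppose there is an $f$-invariant set $\Gamma$ with $m(\Gamma)>0$ on which the Oseledets splitting is non-trivial and fails to be dominated at the $i$-th gap (so $\lambda_i>\lambda_{i+1}$ on $\Gamma$). Passing to a positive-measure subset on which this failure is quantitatively uniform, one shows that for every $\eps>0$ there is $g\in\diff^1_m(M)$ with $\|g-f\|_{C^1}<\eps$ and $LE_i(g)\le LE_i(f)-\delta$, with $\delta>0$ independent of $\eps$. The mechanism is Ma\~n\'e's: lack of domination forces, for a positive-measure set of base points and for suitable long return times, orbit segments along which the two transported parts $E^1\!\oplus\cdots\oplus E^i$ and $E^{i+1}\!\oplus\cdots$ of the Oseledets splitting are quantitatively close to being non-hyperbolically related, so that in local charts a $C^1$-small, $m$-preserving rotation inserted near such a point rotates one of these subspaces into the other along the orbit. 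Organizing these local moves over a Kakutani--Rokhlin tower for a first-return map (a Vitali-type covering argument), so that they are disjoint and individually $C^1$-small, their cumulative effect averages the $i$-th and $(i+1)$-th exponents over a definite fraction of $\Gamma$ and strictly lowers $\int_M(\lambda_1+\dots+\lambda_i)\,dm$. Making the perturbation genuinely volume preserving, rather than merely smooth, is arranged by a Dacorogna--Moser type correction.

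Granting the lemma, fix $f\in\mathcal R$ and suppose the conclusion fails: on a positive-measure invariant set the Oseledets splitting is non-trivial and not dominated, hence there is an index $i$ for which the set where $\lambda_i>\lambda_{i+1}$ and the $i$-th gap is not dominated has positive measure. The lemma then produces, for arbitrarily small $\eps$, diffeomorphisms $g$ with $\|g-f\|_{C^1}<\eps$ yet $LE_i(g)\le LE_i(f)-\delta$, contradicting the continuity of $LE_i$ at $f$ (upper semicontinuity is automatic; it is the failure of lower semicontinuity that is excluded by $f\in\mathcal R$). Therefore, for $f\in\mathcal R$, the set of $x$ whose Oseledets splitting is neither trivial nor dominated is $m$-null. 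The exceptional set is measurable: non-triviality of the Oseledets splitting is a measurable condition, and, on the set where it holds, existence of a dominated splitting over orbit closures refining the Oseledets decomposition is measurable as well.

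The hard part is the perturbation lemma of the second paragraph: absence of domination only supplies, at isolated times, room to rotate invariant directions by a small angle, and converting this into a macroscopic collapse of two Lyapunov exponents demands the delicate tower-and-covering bookkeeping of Bochi--Ma\~n\'e, controlling simultaneously the $C^1$-size of the perturbation (which forces the local rotations to be spread thinly in time) and its volume preservation. By comparison, the semicontinuity in the first paragraph and the Baire argument in the third are soft.
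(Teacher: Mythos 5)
This statement is imported verbatim from Bochi--Viana \cite{BV2005}; the paper offers no proof of it, only the citation, so there is no internal argument to compare against. Your outline faithfully reproduces the actual Bochi--Viana scheme: integrated exponents $LE_j$ as infima of continuous subadditive averages, hence upper semicontinuous, hence with a residual set of continuity points, plus the perturbation lemma that strictly lowers some $LE_i$ wherever the Oseledets splitting is non-trivial and non-dominated, contradicting continuity. The one caveat is that the perturbation lemma --- the Ma\~n\'e-style nudging of invariant directions along towers, made volume preserving --- is the entire mathematical content of \cite{BV2005} and occupies most of that paper; you state it and correctly flag it as the hard part, but as written your proposal is a roadmap to the published proof rather than a self-contained argument. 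For the purposes of this paper, where the theorem is used as a black box, that is exactly the right level of detail.
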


For simplicity, let us sum all these statements in one:
\begin{lemma}\label{lemma.dichotomy} There exists  $\mathcal{R} \subset \diff^{1}_{m}(M)$ a residual subset such that if $f\in \mathcal{R}$, either:
\begin{itemize}
 \item all Lyapunov exponents of $f$ vanish almost everywhere, or
 \item $f$ is ergodic, the Oseledets splitting is globally dominated, and 
 there exists a hyperbolic periodic point $q_{f}$ such that 
 $$\phc(q_{f})\circeq\nuh(f) \circeq M$$
\end{itemize}
\end{lemma}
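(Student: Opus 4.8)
The plan is to realize $\mathcal{R}$ as a finite intersection of the residual sets already obtained in this section and then to unwind the dichotomy of \thmref{thm.genericdichotomy} by a short case analysis. Concretely, let $\mathcal{R}_1,\mathcal{R}_2,\mathcal{R}_3\subset\diff^1_m(M)$ be the residual sets furnished respectively by \thmref{thm.genericdichotomy}, \thmref{AB.periodic} and \thmref{bv05}, and put $\mathcal{R}=\mathcal{R}_1\cap\mathcal{R}_2\cap\mathcal{R}_3$. Since $\diff^1_m(M)$ is a Baire space, a finite intersection of residual sets is residual, so this $\mathcal{R}$ is a candidate. Fix $f\in\mathcal{R}$.

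By \thmref{thm.genericdichotomy} there are two possibilities. If all Lyapunov exponents of $f$ vanish $m$-almost everywhere, we are in the first alternative and there is nothing to prove. Otherwise $\nuh(f)\circeq M$, so in particular $m(\nuh(f))=1>0$; then \thmref{AB.periodic} applies and gives a hyperbolic periodic point $q_f$ with $\phc(q_f)\circeq\nuh(f)\circeq M$ and with $m|_{\nuh(f)}$ being $f$-ergodic. Since $\nuh(f)\circeq M$, the measure $m|_{\nuh(f)}$ coincides with $m$ up to a null set, so $m$ is $f$-ergodic, i.e.\ $f$ is ergodic. This already yields everything required in the second alternative except the domination of the Oseledets splitting.

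For the domination, I would use \thmref{bv05}: at $m$-a.e.\ $x$ the Oseledets splitting of $f$ is either trivial or dominated. First rule out triviality on a full-measure set. If $k(x)=1$ for $m$-a.e.\ $x$, then by ergodicity the single Lyapunov exponent $\lambda_1$ is $m$-a.e.\ equal to a constant $c$; since the sum of the Lyapunov exponents counted with multiplicity integrates to $\int\log\lvert\det Df\rvert\,dm$, and this last quantity is $0$ because $m$ is a smooth invariant measure (so $\log\lvert\det Df\rvert$ is an $m$-coboundary), we get $\dim(M)\cdot c=0$, hence $c=0$, contradicting $\nuh(f)\circeq M$. Therefore at $m$-a.e.\ $x$ the Oseledets splitting is nontrivial and dominated. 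By ergodicity, $k(\cdot)$ and the fiber dimensions $\dim E^i_{\cdot}$ are $m$-a.e.\ constant, so this a.e.-defined dominated splitting agrees a.e.\ with a continuous dominated splitting defined over the closure of the invariant set on which it lives; as $m$ is smooth its support is all of $M$, so that closure is $M$ and the Oseledets splitting is globally dominated. This puts $f$ in the second alternative, completing the proof.

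Since the statement is essentially a repackaging of the quoted theorems, I do not expect a genuine obstacle; the only point requiring a small argument is the upgrade from ``dominated at $m$-a.e.\ point'' to ``globally dominated'', which rests on ergodicity (giving a.e.\ constant fiber dimensions), the full support of the smooth measure $m$, and the standard fact that a dominated splitting over an invariant set extends continuously to its closure.
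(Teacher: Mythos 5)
Your proposal is correct and matches the paper's intent exactly: the lemma is stated there as a mere summary (``let us sum all these statements in one'') of Theorems~\ref{thm.genericdichotomy}, \ref{AB.periodic} and \ref{bv05}, obtained by intersecting the corresponding residual sets, and no further proof is given. Your added details---ruling out the trivial Oseledets splitting via $\int\log|\det Df|\,dm=0$, and upgrading almost-everywhere domination to global domination using ergodicity and the full support of $m$---are the standard arguments the authors leave implicit.
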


The \emph{unstable index} of a periodic point $p$ is the number of eigenvalues of $Df^{{\rm per}(p)}(p)$ with absolute value larger than $1$. We say that two hyperbolic periodic points $p,q\in \pper_H(f)$ are \emph{homoclinically related} if
\[
W^s(o(p),f)\pitchfork W^u(o(q),f) \neq \emptyset \textrm{ and } W^u(o(p),f) \pitchfork W^s(o(q),f) \neq \emptyset.
\]
We will use the following result:
\begin{theorem} \cite{AC2012}\label{thm.genericpointsrelated} There exists $\mathcal{R} \subset \diff^{1}_m(M)$ a residual subset such that if $f\in \mathcal{R}$, then any two periodic points $p,q\in \pper_H(f)$ with the same index are homoclinically related.
\end{theorem}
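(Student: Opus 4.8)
\textbf{Proof strategy for Theorem~\ref{thm.genericpointsrelated} (Abdenur--Crovisier).}
The plan is to combine a connecting-lemma argument with a countable exhaustion over finite configurations of periodic points, using the conservative Hayashi/Wen--Xia connecting lemma for $\diff^1_m(M)$ together with Bonatti--Crovisier's $C^1$-closing and ergodic-closing machinery. First I would recall that for a $C^1$-generic conservative diffeomorphism $f$, the set $\pper_H(f)$ is countable and each hyperbolic periodic orbit is \emph{robustly} hyperbolic after a small perturbation, so it makes sense to index periodic orbits and their continuations locally. Fix a countable basis $\{V_n\}$ of $C^1$-neighbourhoods and, for each pair of rational ``target data'' consisting of two disjoint finite sets of periodic orbits with prescribed indices, I would define the open set $\mathcal{O}$ of diffeomorphisms for which the two chosen orbits have the same index and are homoclinically related (transverse intersections being a $C^1$-open condition). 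The key claim is that $\mathcal{O}$ is \emph{dense} among the diffeomorphisms where the two orbits exist with the same index; granting this, the residual set is $\bigcap$ over all such finite data of (the interior relative to existence) $\mathcal{O} \cup (\text{complement of the existence condition})$, which is $C^1$-residual by Baire.

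The heart of the argument is the density step. Let $p,q$ be hyperbolic periodic points of the same index, say unstable index $u$. Because $f$ is conservative, by Poincar\'e recurrence almost every point of $M$ is recurrent, hence the closure of $W^u(o(p),f)$ and of $W^s(o(q),f)$ are non-trivial invariant sets whose ``interior reach'' can be enlarged by a small perturbation. I would use the conservative connecting lemma of Wen--Xia (and its refinement by Arnaud, and by Bonatti--Crovisier in the volume-preserving setting) to create, by a $C^1$-small area-preserving perturbation supported near a recurrent point, a transverse intersection $W^u(o(p),f)\pitchfork W^s(o(q),f)\ne\emptyset$: one picks a point $z$ recurrent for $f$ whose forward orbit approaches the unstable lamination of $p$ and whose backward orbit approaches the stable lamination of $q$ (such $z$ exists because, after an arbitrarily small perturbation making the system transitive on the relevant homoclinic class via \cite{BV2005}-type genericity, or simply by recurrence of the supports), then applies the connecting lemma along a long orbit segment of $z$ to splice $W^u(o(p))$ into $W^s(o(q))$. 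By the same argument applied to $f^{-1}$ (which is also in $\diff^1_m(M)$), one obtains $W^s(o(p),f)\pitchfork W^u(o(q),f)\ne\emptyset$. Care is needed that the two perturbations can be carried out in disjoint regions, or performed successively, so that both transversalities survive; since transverse heteroclinic intersections are $C^1$-robust this is not an obstruction.

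One then needs to upgrade a \emph{heteroclinic} connection into being \emph{homoclinically related}, i.e. check that the dimension count works: two hyperbolic periodic points $p,q$ with the same unstable index $u$ satisfy $\dim W^u(o(p)) + \dim W^s(o(q)) = u + (\dim M - u) = \dim M$, so a non-empty intersection is generically transverse and of the expected dimension, and the standard inclination ($\lambda$-)lemma then guarantees that the two orbits lie in the same homoclinic class once both crossed intersections are present; moreover by a further small perturbation one may assume these intersections are genuinely transverse (Kupka--Smale-type genericity in $\diff^1_m(M)$, e.g. \cite{BDP2003}-style arguments adapted to the conservative category). The last point is to make the whole construction work \emph{simultaneously} for all (countably many) pairs: this is where the Baire-category bookkeeping enters, and one must be slightly careful that the perturbation realizing the relation for one pair does not destroy a relation already established for another pair --- but again, homoclinic relatedness is a $C^1$-open property, so one builds the residual set as a countable intersection of open-dense sets and is done.

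\textbf{Main obstacle.} The genuinely hard part is the conservative connecting lemma: producing the transverse heteroclinic point by a \emph{volume-preserving} $C^1$-perturbation while keeping full control of the support, since the naive Hayashi perturbation need not preserve $m$. This is exactly the technical input that \cite{AC2012} relies on (ultimately going back to Arnaud's and Bonatti--Crovisier's conservative connecting/closing lemmas); everything else --- the index equality forcing transversality by a dimension count, the $\lambda$-lemma producing the homoclinic relation, and the Baire exhaustion over countably many finite configurations --- is comparatively routine. A secondary subtlety is ensuring that ``same index'' is the right and only hypothesis: if the indices differ, the dimension count fails and no amount of perturbation can force a transverse intersection of stable and unstable manifolds, which is why the statement is restricted to equal-index pairs.
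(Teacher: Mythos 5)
This statement is not proved in the paper at all: it is imported verbatim from \cite{AC2012}, so there is no internal proof to compare against, and your proposal has to be judged on its own. The overall skeleton you give (create both heteroclinic intersections by small conservative perturbations, use the equal-index dimension count plus conservative Kupka--Smale to make them transverse, use that transverse intersections of continuations are $C^1$-open, and run a Baire argument over the countably many pairs of hyperbolic periodic orbits) is indeed the standard architecture of the known proof.

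The genuine gap is in your density step. You propose to apply the plain Hayashi/Wen--Xia/Arnaud connecting lemma to a point $z$ ``recurrent for $f$ whose forward orbit approaches the unstable lamination of $p$ and whose backward orbit approaches the stable lamination of $q$'', and you justify the existence of such a $z$ ``simply by recurrence of the supports''. Poincar\'e recurrence gives no such point: it produces orbits returning near their starting points, not true orbits linking $\overline{W^u(o(p),f)}$ to $\overline{W^s(o(q),f)}$, and in general no single genuine orbit accumulates on both sets --- this is exactly the situation the plain connecting lemma cannot handle. The correct input is (i) the elementary fact that a volume-preserving homeomorphism of a compact connected manifold is chain transitive on all of $M$, so $W^s(o(q),f)$ is always chain-attainable from $W^u(o(p),f)$, and (ii) the \emph{connecting lemma for pseudo-orbits} in the conservative setting (Bonatti--Crovisier), which converts chain-attainability into an actual intersection after a $C^1$-small volume-preserving perturbation, for $f$ in a suitable residual set. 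Your fallback --- ``after an arbitrarily small perturbation making the system transitive on the relevant homoclinic class via \cite{BV2005}-type genericity'' --- does not repair this: \cite{BV2005} concerns Oseledets/dominated splittings and says nothing about transitivity, and generic transitivity of conservative $C^1$-diffeomorphisms is itself a consequence of the same pseudo-orbit connecting machinery, so it cannot be used as an independent, more elementary input. With the pseudo-orbit connecting lemma in place, the rest of your argument (transversality by a further perturbation, openness, and the Baire bookkeeping --- which as written is slightly loose about handling the persistence of the two orbits, but is routine) goes through and matches the proof in the cited literature.
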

Let $PH^r_m(M)$ be the set of $C^r$-partially hyperbolic diffeomorphisms of $M$ that preserve $m$. We will also need the following result.

\begin{theorem}\cite{ACW2017}\label{ACW2017} Stable ergodicity is $C^1$-dense in $PH^r_m(M)$ for any $r>1$.
\end{theorem}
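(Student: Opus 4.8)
This is the theorem of Avila, Crovisier and Wilkinson, and we only outline the strategy. The plan has two parts: a soft reduction of the density statement to an ergodicity statement about accessible systems, and then the proof of that ergodicity statement, which carries all the weight.

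\emph{Step 1 (reduction via stable accessibility).} Say that two points of $M$ lie in the same \emph{accessibility class} of a partially hyperbolic $h$ if they are joined by a concatenation of finitely many $C^1$ arcs, each tangent to $E^{ss}$ or to $E^{uu}$; $h$ is \emph{accessible} if $M$ is a single accessibility class, and \emph{stably accessible} if every $C^1$-nearby partially hyperbolic diffeomorphism is accessible. The first input is the theorem of Dolgopyat and Wilkinson that, for every $r\ge 1$, stable accessibility is $C^1$-dense in $PH^r_m(M)$. So, given $f\in PH^r_m(M)$ and $\eps>0$, I would choose a stably accessible $g\in PH^r_m(M)$ with $d_{C^1}(f,g)<\eps$ and a $C^1$-neighborhood $\U$ of $g$ consisting of accessible diffeomorphisms; shrinking $\U$ (partial hyperbolicity being $C^1$-open) we may assume $\U\subset PH^{1}_{m}(M)$. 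The theorem then follows once every $h\in\U\cap\diff^2_m(M)$ is shown to be ergodic, that is, once one establishes:
\begin{equation*}
(\star)\qquad\text{every }C^{2}\text{ volume-preserving partially hyperbolic accessible diffeomorphism is ergodic.}
\end{equation*}

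\emph{Step 2 (the Hopf argument and its obstruction).} For a $C^{1+\alpha}$ partially hyperbolic map the strong stable and strong unstable foliations $W^{ss}$ and $W^{uu}$ are absolutely continuous, with \emph{no center-bunching assumption}. Hence if $\varphi\in L^2(m)$ is $h$-invariant, comparing forward and backward Birkhoff averages identifies $\varphi$, $m$-a.e., with a function constant along $W^{ss}$-leaves and with one constant along $W^{uu}$-leaves. One then wants to invoke accessibility to propagate constancy along $su$-paths and conclude that $\varphi$ is $m$-a.e.\ constant; the obstacle — the very reason the Burns--Wilkinson ergodicity criterion assumes center bunching — is that the ``junk'' sets where these a.e.\ identities fail accumulate along $su$-paths, so the transport need not close up.

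\emph{Step 3 (removing center bunching).} To bypass this I would pass to the ergodic decomposition $m=\int \mu_x\,dm(x)$: the Hopf argument, applied to enough invariant functions, shows that $\mu_x$ depends mod $0$ only on the $W^{ss}$- and $W^{uu}$-leaves of $x$. On a fixed ergodic component $\mu$ the center Lyapunov exponents are constant, giving a splitting $E^c=E^{c,-}\oplus E^{c,0}\oplus E^{c,+}$ into negative, zero and positive center exponents, and Pesin theory supplies, for $\mu$-a.e.\ $x$, absolutely continuous ``extended'' stable and unstable laminations tangent respectively to $E^{ss}\oplus E^{c,-}$ and $E^{uu}\oplus E^{c,+}$; running Hopf along these yields constancy of $\varphi$ along larger leaves. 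If $\mu$ has no zero center exponents, the extended laminations are genuine Pesin stable/unstable manifolds, and a saturation argument of Pesin-theoretic type (in the spirit of \thmref{criterion_hhtu}) together with essential accessibility forces $\mu=m$. The genuinely new case is that of positive-measure sets with zero center exponents: there one shows that the bunching needed to run the Burns--Wilkinson machinery holds automatically \emph{along Lyapunov-regular orbits}, because the exponential rates on $E^{ss}$ and $E^{uu}$ dominate the subexponential distortion of the neutral bundle $E^{c,0}$; estimating the junk sets inside Pesin charts then lets the transport of essential invariance close up on a full-measure set, giving $\mu=m$ and hence $(\star)$.

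\emph{Main obstacle.} The crux is precisely Step 3 in the zero-center-exponent case: upgrading the uniform center-bunching hypothesis of the Burns--Wilkinson criterion to an along-the-orbit statement valid $m$-a.e. This requires the full machinery of nonuniform hyperbolicity — Lyapunov charts, tempered estimates, absolute continuity of the extended laminations within charts, and a measurable form of ``accessibility propagates essential invariance'' — and is where essentially all of the work of \cite{ACW2017} is concentrated. Step 1 and the classical Hopf argument are, by comparison, routine.
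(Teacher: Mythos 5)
The decisive problem is your Step 1: you reduce the theorem to the statement $(\star)$ that \emph{every} $C^2$ volume-preserving, accessible, partially hyperbolic diffeomorphism is ergodic. That is exactly the Pugh--Shub ergodicity conjecture of \cite{PS1995}, which is open; it is known only under additional hypotheses such as center bunching (Burns--Wilkinson) or one-dimensional center \cite{HHU2008}, and the theorem you are asked to prove is the strictly weaker $C^1$-density statement of \cite{ACW2017}, which is not obtained by proving $(\star)$. Your Step 3, where the center-bunching hypothesis is supposed to be removed, is the point that fails: center bunching is a \emph{finite-time, pointwise} comparison between $\|Df|_{E^{ss}}\|$, $\|Df^{-1}|_{E^{uu}}\|^{-1}$ and the spread $\|Df|_{E^c}\|\cdot\|(Df|_{E^c})^{-1}\|$, whereas Lyapunov regularity on the zero-exponent part of the center gives only asymptotic, subexponential bounds with tempered (non-uniform) constants, and on the nonzero-exponent part the center behaves exponentially, not subexponentially. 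The Burns--Wilkinson julienne/holonomy-distortion estimates require uniform comparability at the finite scales where the ``junk'' sets are controlled; with merely tempered constants the transport of essential invariance along $su$-paths does not close up, and no known argument makes bunching ``automatic along regular orbits.'' If this step worked as sketched, it would prove the full Pugh--Shub conjecture, which \cite{ACW2017} does not claim.

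Note also that the paper under review does not prove this theorem at all --- it is quoted from \cite{ACW2017} --- so the comparison is with the argument in that reference, whose route is genuinely different from yours: it is perturbative. Rather than showing that accessibility alone forces ergodicity, Avila--Crovisier--Wilkinson combine the Dolgopyat--Wilkinson $C^1$-density of \emph{stable} accessibility (this part of your Step 1 is fine) with $C^1$-perturbation techniques controlling the center Lyapunov exponents and the dominated structure of the center bundle (in the spirit of \cite{ACW16}), and they establish ergodicity criteria that are robust for the perturbed systems so constructed; stable ergodicity is obtained on a $C^1$-dense set, not for every accessible $C^2$ system. So your outline both overshoots (Steps 1 and 3 together amount to an open conjecture) and omits the perturbative mechanism that is the actual content of the cited proof.
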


\section{Proof of Theorem \ref{thm.maintheorem}} \label{section.proofmainthm}

Let $f$ be a generic diffeomorphism in $\mathcal{V}$, and let $p_f$ be the fixed point from the definition of $\mathcal{V}$. Our strategy will be to show that either $f$ is partially hyperbolic (hence $f$ is $C^1$-approximated by stably ergodic diffeomorphisms, after Theorem \ref{ACW2017}), or there exists a neighborhood ${\mathcal U}$ of $f$ such that for all $g\in \U\cap\diff^{2}_{m}(M)$, the analytic continuation $p_{g}$ of $p_{f}$ satisfies
$$\phc_{g}(p_{g})\circeq M$$
whence $f$ is also stably ergodic. This will prove Theorem \ref{thm.maintheorem}. \newline\par
Since $\dim E^{uu}=1$, $E^{uu}$ is uniformly expanding, hence, almost every point has at least one positive Lyapunov exponent. Lemma \ref{lemma.dichotomy} then implies that there exists a hyperbolic periodic point $q_{f}$ such that $$\phc(q_{f})\circeq M.$$
If the unstable index of $q_{f}$ is $2$, then, since the Oseledets splitting is dominated we would have a dominated splitting of the form $TM = E^{cu} \oplus E^{ss}$, where $E^{ss}$ is one-dimensional. By Remark \ref{remark.unidimensional}, the direction $E^{ss}$ is uniformly contracting. Since we already had that $TM=E^{uu}\oplus E^{cs}$, with one-dimensional $E^{uu}$ in $\mathcal{V}$, then we must have that there is a dominated  splitting of the form $TM=E^{uu}\oplus E^{c}\oplus E^{ss}$ with one-dimensional subbundles. Hence $f$ is partially hyperbolic, and therefore, by Theorem \ref{ACW2017}, $f$ is $C^1$-approximated by stably ergodic diffeomorphisms. \par
We may therefore assume that the unstable index of $q_{f}$ is one, and therefore, by Theorem \ref{thm.genericpointsrelated} and the following lemma, we have that $\phc(q_{f})=\phc(p_{f})$:
\begin{lemma}\cite{HHTU11} If $p$ and $q$ are homoclinically related, then 
 $$\phc(p)=\phc(q)$$
\end{lemma}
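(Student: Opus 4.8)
The plan is to establish the two genuine set equalities $\phc^{-}(p)=\phc^{-}(q)$ and $\phc^{+}(p)=\phc^{+}(q)$, and then to invoke Theorem~\ref{criterion_hhtu} to pass to $\phc(p)\circeq\phc(q)$. Two symmetries reduce the work to a single inclusion: the roles of $p$ and $q$ are interchangeable since homoclinic relatedness is symmetric, and replacing $f$ by $f^{-1}$ exchanges $W^{-}$ with $W^{+}$ and $W^{u}$ with $W^{s}$, hence $\phc^{-}$ with $\phc^{+}$; so it suffices to prove $\phc^{-}(p)\subseteq\phc^{-}(q)$.

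The geometric engine is the inclination (lambda) lemma. Since $p$ and $q$ are homoclinically related they have the same index, so we may pick a transverse intersection point $z\in W^{s}(o(p),f)\pitchfork W^{u}(o(q),f)$ together with a small $C^1$ embedded disk $D\subset W^{u}(o(q),f)$ through $z$ of the same dimension as the unstable bundle; note $D$ is transverse to $W^{s}(o(p),f)$ at $z$. Because $z\in W^{s}(o(p),f)$, the forward orbit of $z$ converges to the hyperbolic orbit $o(p)$, so the inclination lemma gives that $f^{n}(D)$ accumulates in the $C^1$ topology on $W^{u}_{\mathrm{loc}}(o(p),f)$ as $n\to\infty$. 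Using that $W^{u}(o(p),f)=\bigcup_{n\ge 0}f^{n}(W^{u}_{\mathrm{loc}}(o(p),f))$ and that $W^{u}(o(q),f)$ is $f$-invariant, this upgrades to the statement: every compact embedded plaque of $W^{u}(o(p),f)$ is a $C^1$-limit of embedded plaques contained in $W^{u}(o(q),f)$.

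Now let $x\in\phc^{-}(p)$ and let $a$ be a point where $W^{-}(x,f)$ meets $W^{u}(o(p),f)$ transversally. Since $f$ is $C^2$, $W^{-}(x,f)$ is a $C^1$-immersed submanifold by Pesin's theorem, and $W^{u}(o(p),f)$ is $C^1$-immersed by the stable manifold theorem, so near $a$ we can choose embedded disks $B\subset W^{-}(x,f)$ and $A\subset W^{u}(o(p),f)$ meeting transversally at $a$. By the previous paragraph there are embedded disks $A_{n}\subset W^{u}(o(q),f)$ with $A_{n}\to A$ in $C^1$; transversality being a $C^1$-open condition, for $n$ large $A_{n}$ meets $B$ transversally, so $W^{-}(x,f)\pitchfork W^{u}(o(q),f)\neq\emptyset$, i.e.\ $x\in\phc^{-}(q)$. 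Interchanging $p$ and $q$ gives $\phc^{-}(p)=\phc^{-}(q)$, and running the argument for $f^{-1}$ gives $\phc^{+}(p)=\phc^{+}(q)$. In each situation where the lemma is used the relevant classes of one of the points, say $q$, have positive $m$-measure, so these set equalities force $m(\phc^{\pm}(p))>0$ as well; Theorem~\ref{criterion_hhtu} then applies to both $p$ and $q$, yielding $\phc(p)\circeq\phc^{-}(p)=\phc^{-}(q)\circeq\phc(q)$.

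The step I expect to need the most care is the inclination-lemma argument: one has to check that the $C^1$ accumulation of $f^{n}(D)$ on $W^{u}_{\mathrm{loc}}(o(p),f)$ is strong enough to produce, on the specific compact plaque $A$ near $a$, disks $A_{n}\subset W^{u}(o(q),f)$ converging to $A$ in $C^1$ — this relies on $W^{u}(o(q),f)$ being an injectively immersed $f$-invariant manifold and on $A$ lying inside some finite forward iterate of a local unstable plaque. The remaining ingredients, namely $C^1$-persistence of transverse intersections and the two symmetry reductions, are routine.
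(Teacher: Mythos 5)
The paper offers no proof of this lemma; it is quoted from \cite{HHTU11}, and your argument is in fact the standard one given there: use the inclination lemma at a transverse intersection point $z\in W^{s}(o(p),f)\pitchfork W^{u}(o(q),f)$ to show that compact plaques of $W^{u}(o(p),f)$ are $C^1$-limits of plaques of $W^{u}(o(q),f)$, then use $C^1$-openness of transversality to transfer each $x\in\phc^{-}(p)$ into $\phc^{-}(q)$, and finish by the two symmetries ($p\leftrightarrow q$ and $f\leftrightarrow f^{-1}$). This part is correct and complete. The only criticism is your final step: routing through Theorem~\ref{criterion_hhtu} is both unnecessary and slightly lossy. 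In \cite{HHTU11} the ergodic homoclinic class is \emph{defined} as $\phc(p)=\phc^{+}(p)\cap\phc^{-}(p)$, so the exact equality $\phc(p)=\phc(q)$ follows immediately from the two set equalities you have already established, with no positive-measure hypothesis and no mod-$0$ weakening. As written, your conclusion $\phc(p)\circeq\phc(q)$ is weaker than the stated lemma and holds only when the classes have positive measure; that suffices for every application in this paper, but you should replace that last paragraph by the one-line definitional argument to get the lemma as stated.
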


Therefore, we have that $\phc(p_{f})\circeq M$. Since $f\in\V$ is $C^1$-generic, we have the following:

\begin{lemma} \cite[Lemma 5.1]{AB2012} \label{lem.robustnessphminus} For every $\eps>0$ there exists a $C^{1}$-neighborhood $\U(f)$ of $f$ such that for all $C^{2}$ diffeomorphisms $g$ in $\U(f)$:
 $$m(\phc_{g}(p_{g}))>1-\eps$$
 where the hyperbolic periodic point $p_{g}$ is the analytic continuation of $p_{f}$. 
\end{lemma}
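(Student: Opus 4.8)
The plan is to combine the genericity of $f$ in $\V$ with the lower semi-continuity of the measure of the stable Pesin homoclinic class. First I would recall that we have already established, for the $C^1$-generic $f\in\V$ under consideration, that $\phc(p_f)\circeq M$, i.e. $m(\phc^-(p_f))=1$. The point of the lemma is that this is \emph{robust} in the following weak sense: although $\phc^-(p_g)$ need not have full measure for nearby $g$, its measure stays arbitrarily close to $1$. The engine for this is precisely \cite[Lemma 5.1]{AB2012}, whose hypothesis is exactly that $f$ is a $C^1$-generic volume-preserving diffeomorphism possessing a hyperbolic periodic point $p_f$ whose stable Pesin homoclinic class has full measure; so the bulk of the work is to check that $f$ meets those hypotheses, which we have just done.

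The key steps, in order, are: (i) fix $\eps>0$; (ii) invoke that $f$ is $C^1$-generic (it lies in the residual set produced by Lemma \ref{lemma.dichotomy} and Theorem \ref{thm.genericpointsrelated}, intersected with the open set $\V$), so in particular $f$ satisfies the standing genericity assumptions needed by \cite{AB2012}; (iii) note that $p_f$ is a hyperbolic periodic point of $f$ with $m(\phc^-(p_f))=1>1-\eps$, hence $p_f$ has, for this $f$, a stable Pesin homoclinic class of measure $>1-\eps$; (iv) apply \cite[Lemma 5.1]{AB2012} verbatim to obtain a $C^1$-neighborhood $\U(f)$ such that every $C^2$ diffeomorphism $g\in\U(f)$ satisfies $m(\phc^-_g(p_g))>1-\eps$, where $p_g$ is the hyperbolic continuation of $p_f$ (which exists after shrinking $\U(f)$, since hyperbolic periodic points persist under $C^1$-perturbation). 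Since $\phc_g(p_g)$ is defined via the stable object $\phc^-_g$, this is the desired conclusion.

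The only genuine point to verify — and the one I expect to be the main obstacle — is that $f$ really satisfies the precise genericity hypotheses under which \cite[Lemma 5.1]{AB2012} is stated; in particular, one must make sure that the residual set $\mathcal R$ in which we chose $f$ is contained in (or can be intersected with) the residual set required there, so that no circularity or loss of genericity occurs. Concretely, \cite{AB2012} proves a lower bound on $m(\phc^-_g(p_g))$ for $g$ near a generic $f$ by using that generically the set $\nuh(f)$ is, up to measure zero, a single Pesin homoclinic class and by controlling how Pesin blocks deform; all of this only uses standard $C^1$-generic features (Theorems \ref{AB.periodic} and \ref{bv05}, ergodic closing-type arguments) that are already built into our $\mathcal R$. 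Once this compatibility of residual sets is confirmed, the proof is an immediate citation.
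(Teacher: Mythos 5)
Your proposal matches the paper exactly: the lemma is quoted verbatim from \cite[Lemma 5.1]{AB2012}, and the only content on the paper's side is that it is stated right after establishing $\phc(p_f)\circeq M$ for the $C^1$-generic $f\in\V$, which is precisely the hypothesis-checking you describe in steps (ii)--(iii). The paper gives no further argument, so your citation-plus-verification is the intended proof.
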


The proof of Theorem \ref{thm.maintheorem} will be ended if we prove the following:
\begin{proposition}\label{prop.phcueentire}
Any $C^2$-diffeomorphism $g$ which is sufficiently $C^1$-close to $f$ verifies $\phc^+(p_g) \circeq M.$
\end{proposition}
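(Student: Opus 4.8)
The plan is to prove that $\phc^+(p_g)$ has positive measure robustly, to use \thmref{criterion_hhtu} to see that $\phc^+(p_g)$ agrees mod $0$ with $\phc(p_g)$ and that $\phc(p_g)\subset\nuh(g)$, to deduce by a dimension count that on this set the Oseledets unstable manifold is the one-dimensional strong unstable manifold $W^{uu}$, and then to run a Hopf-type absolute continuity argument upgrading \emph{positive measure} to \emph{full measure}.

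First I would check that $m(\phc^+(p_g))>0$ for every $C^2$ diffeomorphism $g$ that is $C^1$-close to $f$. The dominated splitting $TM=E^{uu}_g\oplus E^{cs}_g$ with $\dim E^{uu}_g=1$ and the hyperbolic continuation $p_g$ persist $C^1$-robustly, so the strong unstable foliation $\mathcal F^{uu}_g$ and the local stable manifold $W^s_{\mathrm{loc}}(p_g)$ vary continuously with $g$; since transverse intersections are robust, condition~(2) in the definition of $\V$ survives in an open form: there is a nonempty open set $U_g$ (a slight shrinking of $U_f$) such that $W^{uu}_{\mathrm{loc}}(x,g)$ meets $W^s_{\mathrm{loc}}(p_g,g)$ transversely for every $x\in U_g$. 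Since $E^{uu}_g$ is uniformly expanding one has $W^{uu}_{\mathrm{loc}}(x,g)\subset W^+(x,g)$, and the above is then also a transverse intersection of $W^+(x,g)$ with $W^s(o(p_g),g)$; hence $U_g\subset\phc^+(p_g)$ and $m(\phc^+(p_g))\geq m(U_g)>0$.

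Next, by \lemref{lem.robustnessphminus} (using that $f$ is $C^1$-generic in $\V$) one also has $m(\phc^-(p_g))>0$, so \thmref{criterion_hhtu} applies to $g$ and yields $\phc^+(p_g)\circeq\phc^-(p_g)\circeq\phc(p_g)$, with $m|_{\phc(p_g)}$ ergodic and $\phc(p_g)\subset\nuh(g)$. For $m$-a.e.\ $x\in\phc^-(p_g)$ there is a transverse intersection $W^-(x,g)\transv W^u(o(p_g),g)\neq\emptyset$; since $p_g$ has unstable index $1$, the submanifold $W^u(o(p_g),g)$ is one-dimensional, so transversality forces $\dim W^-(x,g)\geq 2$. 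Combined with $x\in\nuh(g)$ and $E^{uu}_x\subset E^+_x$ (again because $E^{uu}_g$ is uniformly expanding), this gives $\dim E^-_x=2$ and $\dim E^+_x=1$, hence $E^+_x=E^{uu}_x$ and $W^+(x,g)=W^{uu}(x,g)$. Consequently $\phc^+(p_g)$ is, mod $0$, saturated by the genuine foliation $\mathcal F^{uu}_g$ and --- being $\circeq\phc^-(p_g)$ --- also saturated mod $0$ by the Pesin stable lamination $W^-$; these two laminations are transverse, with $E^{uu}_x\oplus E^-_x=T_xM$ for a.e.\ $x$.

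Finally, both $\mathcal F^{uu}_g$ (a uniformly expanding $C^1$ foliation, $g$ being $C^2$) and $W^-$ (Pesin's stable lamination) are absolutely continuous, and their leaf dimensions sum to $\dim M=3$. By the Hopf-type argument already underlying \thmref{criterion_hhtu} --- working on Pesin blocks where $W^-$ has uniform local structure and using the absolute continuity of both laminations to propagate local density --- a measurable set saturated mod $0$ by both of them has $m$-measure $0$ or $1$. Since $m(\phc^+(p_g))>0$ by the first step, it must equal $1$, i.e.\ $\phc^+(p_g)\circeq M$. I expect the last step to be the main obstacle: the Fubini/absolute-continuity bookkeeping has to be done with care because $W^-$ is only a measurable lamination with non-uniform leaf sizes --- though this is essentially the mechanism behind \thmref{criterion_hhtu}, so it should largely reduce to that theorem. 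A secondary, routine point is the robust persistence invoked in the first step, which rests on the $C^1$-robustness of the splitting $E^{uu}\oplus E^{cs}$ and of the hyperbolic periodic orbit $o(p_f)$.
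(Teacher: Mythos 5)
Your first three steps are fine (and the positivity of $m(\phc^+(p_g))$ via $U_g\subset\phc^+(p_g)$ is exactly how the paper gets Lemma \ref{lemma.PH} started), but the final ``Hopf-type'' step has a genuine gap, and it is precisely the gap this proposition exists to fill. Essential bi-saturation of $\phc^+(p_g)$ by $\mathcal F^{uu}_g$ and by the Pesin lamination $W^-$ does \emph{not} yield a zero--one law. Theorem \ref{criterion_hhtu} already extracts everything the Hopf argument gives: $\phc(p_g)$ is a single ergodic component of positive measure. That is the end of what absolute continuity of the two laminations buys, because $W^-$ only has two-dimensional leaves on $\phc(p_g)$ itself (mod $0$); it places no constraint whatsoever on the complement $\Gamma(g)=M\setminus\phc^+(p_g)$, which could a priori be a positive-measure, compact, $u$-saturated invariant set on which the center-stable exponents vanish and no two-dimensional Pesin stable manifolds exist. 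Nothing in your argument rules this out --- indeed, your claim that ``a measurable set saturated mod $0$ by both laminations has measure $0$ or $1$'' is essentially the ergodicity of $g$, i.e.\ the conclusion you are trying to prove. (If $\mathcal F^{uu}$ were minimal, the compact $u$-saturated set $\Gamma(g)$ would have to be empty, but minimality is not assumed here; that is the setting of the earlier work \cite{NRH2019} that this paper is generalizing.)

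Note also that you never use hypothesis (3) in the definition of $\V$, which is the decisive one. The paper's proof attacks the complement directly: by Lemma \ref{lemma.PH} (which uses hypothesis (3)), $\Gamma(g)$ is a \emph{partially hyperbolic} compact invariant set for every $g$ near $f$, hence saturated by genuine strong stable and strong unstable manifolds (Lemma 6.1 of \cite{JRH2012}). Assuming $m(\Gamma(f_n))>0$ along $C^2$ maps $f_n\to f$, one passes to $\Gamma'=\limsup\Gamma(f_n)\subset\Gamma(f)$, which is still $su$-saturated by continuity of the strong manifolds, and then invokes the structure theory of chapter 6 of \cite{JRH2012}: such a positive-measure $su$-saturated partially hyperbolic set forces two periodic points $q_1,q_2$ with $W^{ss}(q_1,f)\cap W^{uu}(q_2,f)\neq\emptyset$, a quasi-transverse heteroclinic connection that is excluded $C^1$-generically. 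To repair your proof you would need to replace your last paragraph by an argument of this kind about the bad set; the Hopf mechanism alone cannot do it.
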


It will be convenient to introduce the following notation:
$$
\Gamma(g)=M \setminus \phc^+(p_g).
$$ 
Let us call $\Gamma(g)$ the {\em bad set}.
In the following lemma we state some basic properties of $\Gamma(g)$. 

\begin{lemma} The set $\Gamma(g)$ is compact, $g$-invariant, $u$-saturated. $C^1$-generically it does not contain periodic points of unstable index $1$.
\end{lemma}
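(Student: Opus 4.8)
The plan is to verify the four asserted properties one at a time. First, $\Gamma(g)$ is compact: by definition $\Gamma(g) = M \setminus \phc^+(p_g)$, and $\phc^+(p_g)$ is the set of points $x$ whose Pesin stable manifold $W^+(x,g)$ has a transverse intersection with $W^s(o(p_g),g)$. Since an inclination lemma / $\lambda$-lemma argument shows that if $W^+(x,g) \pitchfork W^s(o(p_g),g) \neq \emptyset$ then the same holds for all points in a $W^+$-neighborhood of $x$ inside the Pesin block, one sees that $\phc^+(p_g)$ is a union of (pieces of) $W^+$-leaves and, crucially, it is $W^+$-saturated; moreover it is easy to see that if a point $x$ is accumulated by points with $W^+(x_n,g)$ meeting $W^s(o(p_g),g)$ transversally, then $W^+(x,g)$ also meets it transversally, because transverse intersections are stable. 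Hence $\phc^+(p_g)$ is open in an appropriate sense and $\Gamma(g)$ is closed, thus compact since $M$ is. Second, $g$-invariance: since $W^+(g(x),g) = g(W^+(x,g))$ and $W^s(o(p_g),g)$ is $g$-invariant (it is the orbit of a periodic point's stable manifold), a transverse intersection for $x$ is carried by $g$ to a transverse intersection for $g(x)$; hence $\phc^+(p_g)$ is $g$-invariant and so is its complement $\Gamma(g)$.

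Third, $u$-saturation: here I would use that $W^+(x,g)$ contains the strong unstable leaf $W^{uu}(x,g)$ whenever the unstable direction is uniformly expanding (which holds since $\dim E^{uu}=1$ for $g$ near $f$, by the domination and Remark \ref{remark.unidimensional}). More precisely, for $y \in W^{uu}(x,g)$ one has $W^+(y,g) = W^+(x,g)$ as immersed submanifolds (the strong unstable leaf is contained in the Pesin unstable leaf), so $y \in \phc^+(p_g)$ iff $x \in \phc^+(p_g)$; therefore $\phc^+(p_g)$ is $u$-saturated and so is $\Gamma(g)$.

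Fourth, the $C^1$-generic absence of periodic points of unstable index $1$ in $\Gamma(g)$: by Theorem \ref{thm.genericpointsrelated}, $C^1$-generically all hyperbolic periodic points of the same index are homoclinically related, so any hyperbolic periodic point $r$ of unstable index $1$ is homoclinically related to $p_f$, hence $W^u(o(r),f) \pitchfork W^s(o(p_f),f) \neq \emptyset$; combined with $W^+(r,f) = W^u(o(r),f)$ for a hyperbolic periodic point, this gives $r \in \phc^+(p_f)$, i.e. $r \notin \Gamma(f)$. One must also rule out index-$1$ periodic points that are not hyperbolic, but $C^1$-generically all periodic points are hyperbolic (Kupka--Smale in the conservative category). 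I expect the main subtlety to be the first point — making the topological statements about $\phc^+(p_g)$ precise, in particular that $\phc^+$ is genuinely closed under limits, which requires being careful that the transverse intersection points stay in a region where the Pesin manifolds vary continuously; one typically circumvents this by working with a fixed Pesin block of positive measure and using that $\phc^+(p_g)$ coincides mod $m$ with an open-in-the-Pesin-block set, or by invoking the relevant lemma from \cite{HHTU11} where these saturation and closedness properties of Pesin homoclinic classes are established. I would cite that reference rather than reprove it.
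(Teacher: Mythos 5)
Your proposal follows the same outline as the paper's proof: openness of $\phc^{+}(p_g)$ gives compactness of $\Gamma(g)$, invariance and $u$-saturation are read off from the definition of $W^{+}$, and the absence of generic periodic points of index $1$ comes from Theorem \ref{thm.genericpointsrelated}. The one place where you diverge is the openness step. You work with the Pesin manifolds $W^{+}$, correctly identify that their merely measurable dependence on the base point is the obstruction, and propose to route around it via Pesin blocks or by citing \cite{HHTU11}. The paper's argument avoids Pesin theory altogether: for $g$ near $f\in\V$ the bundle $E^{uu}_g$ is one-dimensional and dominated, hence uniformly expanding (Remark \ref{remark.unidimensional}), so \emph{every} point $x$ has a strong unstable manifold $W^{uu}(x,g)\subset W^{+}(x,g)$ which varies continuously with $x$ (and with $g$) by the uniform Stable Manifold Theorem; since $\dim W^{uu}+\dim W^{s}(o(p_g))=3$ and $TW^{s}(o(p_g))\subset E^{cs}_g$, any intersection point of $W^{uu}(x,g)$ with $W^{s}(o(p_g),g)$ is automatically transverse and persists for all $y$ in a neighborhood of $x$. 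This yields genuine openness in $M$ and is the argument you should use here; your instinct that something must be said to pass between the $W^{+}$-formulation of \eqref{eq.phc.+} and the $W^{uu}$-formulation is sound, and that is exactly where the saturation properties from \cite{HHTU11} (or the $u$-saturation you prove, $W^{+}(y,g)=W^{+}(x,g)$ for $y\in W^{uu}(x,g)$) enter.

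One genuine error, though an inessential one: the claim that ``$C^1$-generically all periodic points are hyperbolic (Kupka--Smale in the conservative category)'' is false --- in $\diff^1_m(M)$ elliptic or partially elliptic periodic points are not generically removable (compare Newhouse's dichotomy for area-preserving surface diffeomorphisms). This does not damage the proof, because the lemma, like Theorem \ref{thm.genericpointsrelated} on which it rests, is a statement about \emph{hyperbolic} periodic points, and only those are used in the sequel; the correct fix is simply to read ``periodic points of unstable index $1$'' as ``hyperbolic periodic points of unstable index $1$'' rather than to invoke a conservative Kupka--Smale theorem.
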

\begin{proof}[Proof of the lemma] Clearly, $\Gamma(g)$ is $g$-invariant and $u$-saturated. By the Stable Manifold Theorem we have if $W^{uu}(x,g) \pitchfork W^s(o(p_g),g) \neq \emptyset$ then there exists an open neighborhood $V$ of $x$ in $M$ such that for every $y \in V$ we have $W^{uu}(y,g) \pitchfork W^s(o(p),g) \neq \emptyset$. Hence, $\phc^+(p_g)$ is an open set. Therefore, $\Gamma(g)$ is a compact set. By Theorem \ref{thm.genericpointsrelated}, $C^1$-generically in $\diff^{1}_m(M)$, all periodic points of the same index are homoclinically related, then $C^1$-generically $\Gamma(g)$ has no periodic points with unstable index equal to the unstable index of $p_g$. This completes the proof of the lemma.
\end{proof}

\begin{lemma}[\cite{NRH2019}, Lemma $3.2$]\label{lem.haussemicontinuity}
The function $h\mapsto \Gamma(h)$ is upper semicontinuous, that is, if $(h_n)_{n\in \mathbb{N}}$ is a sequence converging to $h$ in the $C^1$-topology then $\limsup \Gamma(h_n) \subset \Gamma(h)$. Equivalently, for every $h\in\V$ and every $\eps>0$ there exists a $C^{1}$-neighborhood $\U\subset\diff^{1}_{m}(M)$ such that if $g\in\U$, then $\Gamma(g)\subset B_{\eps}(\Gamma(h))$, where $B_{\eps}(A)=\{x\in M: d(x,A)<\eps\}$.
\end{lemma}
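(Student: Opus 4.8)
The plan is to prove the equivalent ``$\eps$-$\delta$'' formulation directly: given $h\in\V$ and $\eps>0$, I must produce a $C^1$-neighborhood $\U$ of $h$ so that $\Gamma(g)\subset B_\eps(\Gamma(h))$ for every $g\in\U$. Suppose not; then there is a sequence $g_n\to h$ in the $C^1$-topology and points $x_n\in\Gamma(g_n)$ with $d(x_n,\Gamma(h))\ge\eps$. By compactness of $M$, pass to a subsequence so that $x_n\to x_\infty$; then $d(x_\infty,\Gamma(h))\ge\eps$, so in particular $x_\infty\notin\Gamma(h)$, i.e. $x_\infty\in\phc^+(p_h)$. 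The goal is to derive a contradiction by showing $x_n\in\phc^+(p_{g_n})$ for all large $n$.

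The key step is the \emph{openness and robustness of transverse intersections of strong unstable leaves with the stable manifold of $p$}, which was already extracted in the proof of the previous lemma from the Stable Manifold Theorem. Concretely: since $x_\infty\in\phc^+(p_h)$, we have $W^{uu}(x_\infty,h)\pitchfork W^s(o(p_h),h)\ne\emptyset$. A transverse intersection point $z$ lies on a compact piece of the local strong unstable manifold of $x_\infty$ and on a compact piece of a local stable manifold of some $h^k(p_h)$. Both of these local manifolds vary continuously in the $C^1$-topology: the local strong unstable manifolds depend continuously on the base point and on the diffeomorphism (continuity of the strong unstable lamination under $C^1$-perturbations, using that $E^{uu}$ is a robust one-dimensional uniformly expanding bundle on all of $\V$), and the local stable manifold of the analytic continuation $p_{g_n}$ converges in the $C^1$-topology to that of $p_h$ (stable manifolds of hyperbolic periodic points persist and vary continuously). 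Since transversality is an open condition, for $n$ large the compact local leaf $W^{uu}_{\mathrm{loc}}(x_n,g_n)$ still intersects $W^s_{\mathrm{loc}}(o(p_{g_n}),g_n)$ transversely; hence $x_n\in\phc^+(p_{g_n})$, contradicting $x_n\in\Gamma(g_n)$. This proves $\limsup\Gamma(g_n)\subset\Gamma(h)$, and the neighborhood formulation follows by the standard equivalence between sequential upper semicontinuity of a compact-set-valued map and the $B_\eps$-inclusion statement (again using compactness of $M$).

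The main obstacle is making the continuity of the local strong unstable leaves precise enough: one must fix uniform sizes for the local leaves $W^{uu}_{\mathrm{loc}}$ over all of $\V$ (possible since the domination constants, and hence the cone fields and the plaque-family sizes for the invariant expanding bundle $E^{uu}$, are locally uniform in the $C^1$-topology on the open set $\V$) so that ``the'' transverse intersection point $z$ stays on the fixed-size local leaves of the perturbed system; the rest is a routine continuity-plus-openness argument. I note that this is exactly Lemma 3.2 of \cite{NRH2019}, so for the present paper it suffices to cite it; the sketch above indicates the proof for completeness.
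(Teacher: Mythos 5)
Your argument is correct and is essentially the same openness-of-transversality argument that the paper itself only cites (from \cite{NRH2019}, Lemma 3.2) and that it already uses in the preceding lemma to show $\phc^+(p_g)$ is open for fixed $g$; you simply upgrade that openness to joint continuity in $(x,g)$ via persistence of $p_g$ and $C^1$-continuity of compact pieces of $W^{uu}$ and $W^s(o(p_g))$. The only cosmetic caveat is that the transverse intersection point need not lie on a fixed-size \emph{local} leaf, so one should phrase the continuity for compact pieces of the global manifolds (finite iterates of local ones), which your final paragraph effectively acknowledges.
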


\begin{lemma}\label{lemma.PH}
There exists a neighborhood $\U$ of $f$ such that for all $g\in\U$, the set $\Gamma(g)$ is partially hyperbolic. That is, it admits a decomposition $TM=E^{uu}\oplus E^{c}\oplus E^{ss}$ .
\end{lemma}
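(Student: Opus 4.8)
The plan is to confine the bad set $\Gamma(g)$ inside the maximal invariant set of the complement of $U_f$, which is partially hyperbolic by condition (3) in the definition of $\V$, and then to invoke the robustness of partial hyperbolicity of compact invariant sets. Write $U:=U_f$ and recall that $\Lambda_f := \bigcap_{n\in\Z}f^n(M\setminus U)$ is partially hyperbolic. First I would check that $\Gamma(f)\subset\Lambda_f$. By condition (2), for every $x\in U$ one has $W^{uu}(x,f)\transv W^s(o(p_f),f)\ne\emptyset$; since $\dim E^{uu}=1$ this bundle is uniformly expanding (Remark~\ref{remark.unidimensional}), so $W^{uu}(x,f)\subset W^+(x,f)$, and hence $W^+(x,f)\transv W^s(o(p_f),f)\ne\emptyset$, i.e. $x\in\phc^+(p_f)$. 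Thus $U\subset\phc^+(p_f)$, so $\Gamma(f)\subset M\setminus U$; and since $\Gamma(f)$ is $f$-invariant (as observed above), $\Gamma(f)=f^n(\Gamma(f))\subset f^n(M\setminus U)$ for all $n\in\Z$, whence $\Gamma(f)\subset\Lambda_f$. In particular $\Gamma(f)$ is partially hyperbolic.

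Next I would pass to nearby $g$. Since $\Gamma(f)$ is a compact partially hyperbolic invariant set, the invariant cone fields defining its splitting extend to a compact neighborhood $W$ of $\Gamma(f)$ and remain invariant (with uniform contraction/expansion of the extreme ones) under the dynamics of every $g$ in some $C^1$-neighborhood $\U_0$ of $f$; consequently every compact $g$-invariant subset of $W$ is partially hyperbolic, with a splitting $TM=E^{uu}\oplus E^c\oplus E^{ss}$. By the upper semicontinuity of $h\mapsto\Gamma(h)$ (\lemref{lem.haussemicontinuity}) there is a $C^1$-neighborhood $\U\subset\U_0$ of $f$ with $\Gamma(g)\subset W$ for every $g\in\U$. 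For such $g$, the set $\Gamma(g)$ is compact, $g$-invariant, and contained in $W$, hence partially hyperbolic; if $\Gamma(g)=\emptyset$ there is nothing to prove. An essentially equivalent route keeps track of $U$ uniformly: condition (2) is $C^1$-open, so for $g$ near $f$ one still has $U\subset\phc^+(p_g)$, hence $\Gamma(g)\subset\Lambda_g:=\bigcap_{n\in\Z}g^n(M\setminus U)$; since maximal invariant sets vary upper semicontinuously, $\Lambda_g\subset W$ for $g$ close to $f$, so $\Lambda_g$, and therefore $\Gamma(g)$, is partially hyperbolic.

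The only genuinely non-formal ingredient is this robustness package: that partial hyperbolicity persists, for $C^1$-nearby maps, on every compact invariant set sitting in a fixed neighborhood of the original one, together with the fact that the relevant invariant set ($\Gamma(g)$, or $\Lambda_g$) can be forced into any prescribed neighborhood of $\Gamma(f)$ (resp. $\Lambda_f$) for $C^1$-nearby $g$. Both are standard — the first because partial hyperbolicity is expressed by invariant cone fields plus uniform expansion/contraction of the extreme bundles, which are $C^1$-open conditions meaningful on any compact set meeting the cone-field domain; the second is the upper semicontinuity of maximal invariant sets, already recorded for $\Gamma$ in \lemref{lem.haussemicontinuity}. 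Apart from that, the only thing to notice is the inclusion $W^{uu}\subset W^+$, which is exactly what lets condition (2) be read as the inclusion $U\subset\phc^+$ used above.
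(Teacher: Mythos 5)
Your proposal is correct and follows essentially the same route as the paper: show $U_f\subset\phc^+(p_f)$ so that $\Gamma(f)\subset\Lambda_f$ is partially hyperbolic, then combine the $C^1$-robustness of partial hyperbolicity on compact invariant sets contained in a fixed neighborhood of $\Gamma(f)$ with the upper semicontinuity of $h\mapsto\Gamma(h)$ from Lemma~\ref{lem.haussemicontinuity}. You in fact supply a detail the paper leaves implicit, namely that $W^{uu}(x,f)\subset W^{+}(x,f)$ is what converts condition (2) into the inclusion $U_f\subset\phc^{+}(p_f)$.
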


\begin{proof} From the definition of $\V$, it follows that the open set $U_{f}$ is contained in $\phc^{+}(p_{f})$. Therefore, $\Gamma(f)\subset \Lambda_{f}$ the maximal invariant set of $M\setminus U_{f}$, which is partially hyperbolic. By continuity of partial hyperbolicity, there exists $\eps>0$ and $\U_{1}$ such that every $g$-invariant set contained in $B_{\eps}(\Gamma(f))$, with $g\in \U_{1}$ is partially hyperbolic. Since $g\mapsto \Gamma(g)$ is upper semicontinuous, there exists a $C^{1}$-neighborhood $\U\subset \U_{1}$ such that $\Gamma(g)\subset B_{\eps}(\Gamma(f))$. This proves the claim. 
\end{proof}

\begin{proof}[Proof of Proposition \ref{prop.phcueentire}]
Suppose that the conclusion of Proposition \ref{prop.phcueentire} does not hold. Then, there exists a sequence of $C^2$-diffeomorphisms $(f_n)_{n\in \mathbb{N}}$ converging to $f$ in the $C^1$-topology such that $m(\Gamma(f_n))>0$. For each $n\in \mathbb{N}$, the set $\Gamma(f_n)$ satisfies the following property: for any $ x\in \Gamma(f_n)$ the set $W^{ss}(x,f_n) \cup W^{uu}(x,f_n)$ is contained in $\Gamma(f_n)$. This follows from Lemma $6.1$ in \cite{JRH2012}. In particular, the set $\Gamma(f_n)$ is $su$-saturated for any $n\in \mathbb{N}$.

Let $\Gamma' = \limsup \Gamma(f_n)$, and observe that $\Gamma'$ is a non-empty compact set. By Lemma \ref{lem.haussemicontinuity}, we have that $\Gamma' \subset \Gamma(f)$. Since the set $\Gamma(f_n)$ is $f_n$-invariant, for each $n\in \mathbb{N}$, it follows that $\Gamma'$ is $f$-invariant. Moreover, since each set $\Gamma(f_n)$ is $su$-saturated, we conclude that $\Gamma'$ is also $su$-saturated. Indeed, every point $x$ in $\Gamma'$ is the limit of a converging sequence of points $x_{n}\in \Gamma(f_{n})$. The $f_{n}$-stable and unstable manifolds of $x_{n}$ are contained in $\Gamma(f_{n})$. 
Since these stable and unstable manifolds are hyperbolic ($\Gamma(f_{n})$ is partially hyperbolic), they vary continuously with respect to the variable $x$ and to the variable $f$, so their limits are the $f$-stable and unstable manifolds of $x$. Therefore the $f$-stable and unstable manifolds of $x$ are contained in $\Gamma'$. \par

From the results in chapter $6$ of \cite{JRH2012}, there are two periodic points $q_1, q_2 \in \Gamma'$ such that $W^{ss}(q_1,f) \cap W^{uu}(q_2,f) \neq \emptyset$. However, this does not happen generically. Since $f$ is a $C^1$-generic diffeomorphism, we get a contradiction. Hence, for any $C^2$-diffeomorphism $g$ in a $C^1$-neighborhood of $f$ we have that $m(\Gamma(g)) = 0$.
\end{proof}

Now we are ready to prove Theorem \ref{thm.maintheorem}.

\begin{proof}[Proof of Theorem \ref{thm.maintheorem}]
Let $f$ be a $C^1$-generic diffeomorphism in $\mathcal{V}$ and let $g$ be a volume preserving $C^2$-diffeomorphism which is $C^1$-close to $f$. By Lemma \ref{lem.robustnessphminus}, we have that $m(\phc^-(p_g)) >0$. By Proposition \ref{prop.phcueentire}, we obtain that $m(\phc^u(p_g))=1$. By the criterion of ergodicity (Theorem \ref{criterion_hhtu}) we conclude that $g$ is ergodic. 
\end{proof}

\section{Proof of Theorem \ref{main.isotopy}} \label{section.thmA}
Let $f_{0}:M^{3}\to M^{3}$ be a volume preserving partially hyperbolic diffeomorphism in a closed Riemannian $3$-manifold. Assume there exists a periodic point $p$ such that $f_0$ is $C^{1}$-linearizable in a neighborhood of $p$. Call $F_{0}:\R^{3}\to\R^{3}$ the linearization of $f_{0}$ in a suitable neighborhood $U$ of $p$. Assume there exist $\mu,\rho,\lambda\in \R$ so that $F_{0}$ can be represented by a matrix $\diag(\mu,\rho,\lambda)$. 
We will start by asking that:
\begin{enumerate}
 \item $p$ be fixed
 \item $0<\lambda<\rho<1<\mu$
\end{enumerate}
We will see later that we can always reduce the case to this situation. We will consider a volume preserving $C^{0}$-perturbation $F_{1}$ of $F_{0}$ supported in $U\subset \bigcup_{x\in W^s_{loc}(p)} W^{uu}_{\eps}(x)$ so that 
\begin{enumerate}
\item $DF_{1}(0,0,0)=\diag(\mu, \mu^{-\frac12},\mu^{-\frac12})$ and $DF_{1}(x,y,z)=F_{0}$ outside a small ball $B_{\eps}(0)$ 
\item a cone field of the form $$\mathcal{C}^{uu}_{\gamma}=\{v:\|(\pi_{2}+\pi_{3})v\|\leq \gamma\|\pi_{1}v\|\}$$ satisfies 
\begin{equation}\label{eq.cones}
DF(x)\C^{uu}_{\gamma}\subset\C^{uu}_{\xi} 
\end{equation}
for some $0<\xi<\gamma$
\end{enumerate}

It is easy to see that we can also choose $\gamma',\xi'$, so that $$DF^{-1}(x)\C^{cs}_{\gamma'}\subset\C^{cs}_{\xi'},$$ where $C^{cs}_{\gamma}=\overline{\R^{3}\setminus\C^{uu}_{\gamma}}$. We will follow the construction of  \cite{Ma1978} and \cite{BV2000}, but in a volume preserving setting. 
For completeness, we include a short explanation of how this is done. 
\subsection*{Step 1}
Let $\eta=\frac{\sqrt{\mu}}{\lambda}>1$. We will find a flow $\phi_{t}$ of a divergence-free vector field $X$, so that $D\phi_{1}(0,0)=\diag(\eta^{-1},\eta)$.\par
Consider a smooth function $\psi:\R\to\R$ so that $\psi'(0)=\sqrt{\log\eta}$ and $\psi(y)=0$ outside a small ball $B_{\eps}(0)$. We can choose $\psi$ so that $\psi(y)\psi''(y)$ is arbitrarily small. The field $X(y,z)=(\psi(y)\psi'(z),-\psi'(y)\psi(z))$ is divergence-free. Let $\phi_{t}$ be the flow of $X$, that is $\frac{\partial}{\partial t}\phi_{t}(y,z)=X(\phi_{t}(y,z))$. Then $D\phi_{t}(x,y)$ satisfies the initial value problem $\frac{\partial}{\partial t}D\phi_{t}(y,z)=DX(\phi_{t}(y,z))D\phi_{t}(y,z)$ with $D\phi_{0}(y,z)=id$. In particular $D\phi_{t}(0,0)=\exp (DX(0,0)t)=\diag(\eta^{-t},\eta^{t})$, and therefore $DH(0,0)=D\phi_{1}(0,0)=\diag(\eta^{-1},\eta)$. Since $X(y,z)=(0,0)$ outside a small ball $B_{\eps}(0,0)$, it is easy to see that $\phi_{t}(y,z)=(y,z)$ outside $B_{\eps}(0,0)$.

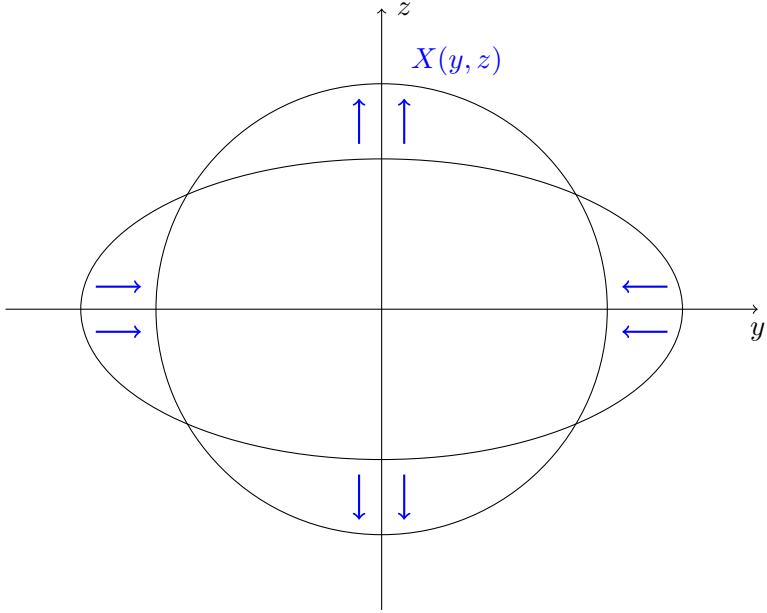
\begin{figure}
 \begin{tikzpicture}
 \draw[->] (-5,0)--(5,0);
\draw (5,-0.3) node {$y$};
\draw (0.3,4) node {$z$};
 \draw[->] (0,-4)--(0,4);
 \draw (0,0) ellipse (4cm and 2cm);
 \draw (0,0) circle (3cm);
 \draw[->, thick, color=blue] (3.8,0.3)--(3.2,0.3);
  \draw[->, thick, color=blue] (3.8,-0.3)--(3.2,-0.3);
   \draw[->, thick, color=blue] (-3.8,0.3)--(-3.2,0.3);
  \draw[->, thick, color=blue] (-3.8,-0.3)--(-3.2,-0.3);
 \draw[<-, thick, color=blue] (0.3,2.8)--(0.3,2.2); 
  \draw[<-, thick, color=blue] (-0.3,2.8)--(-0.3,2.2); 
 \draw[<-, thick, color=blue] (0.3,-2.8)--(0.3,-2.2); 
  \draw[<-, thick, color=blue] (-0.3,-2.8)--(-0.3,-2.2); 
  \draw[thick, color=blue] (1,3.3) node {$X(y,z)$};
\end{tikzpicture} 
\caption{Step 1: the divergence-free vector field $X(y,z)$}
\end{figure}

\subsection*{Step 2}  
Now, let $g:\R^{3}\to\R^{2}$ be $g(x,y,z)=\phi_{t(x)}(\rho y,\lambda z)$, where $t:\R\to\R$ is a smooth function such that $t(0)=1$ and $t(x)=0$ outside a small $B_{\eps}(0)$. Then, 
the Jacobian of $g$ with respect to $yz$ satisfies:

$$J_{yz}(g)(x,y,z)=\left|
\begin{array}{cc}
 g_{1y}&g_{1z}\\
 g_{2y}&g_{2z}
\end{array}
\right|=\mu^{-1}\qquad\text{in}\quad\R^{3}$$

and the derivatives satisfy $|g_{*}|\leq \rho$ for all $*=1y,1z,2y,2z$ and $|g_{*}|\leq K$ for $*=1x,2x$ for some $K>0$. Let $a>0$ be such that $aK<\eps$. If we consider the map $F_{1}(x,y,z)=(\mu x,ag(x,a^{-1}y,a^{-1}z))$, then $F_{1}$ is isotopic to $F_{0}$ (in fact, $C^{0}$ close)
$F_{1}$ preserves volume since, as it is easy to check, $J_{yz}(g_{a})=\mu^{-1}$, where $g_{a}(x,y,z)=ag(x,a^{-1}y,a^{-1}z)$. Moreover, 
$$DF_{1}(x,y,z)=\left(
\begin{array}{ccc}
\mu&0&0\\
ag_{1x}&g_{1y}&g_{1z}\\
ag_{2x}&g_{2y}&g_{2z} 
\end{array}
\right)$$

$g_{*}$ is evaluated in $(x,a^{-1}y,a^{-1}z)$ for all $*=1x,1y,1z,2x,2y,2z$. 

\begin{figure}[h]
\begin{subfigure}{0.45\textwidth}
 \includegraphics[width=\textwidth]{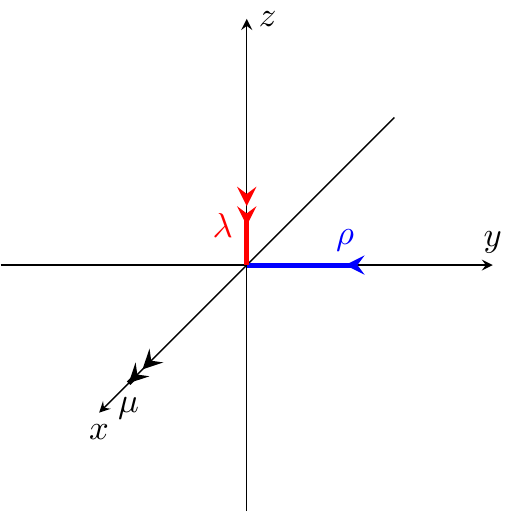}
\end{subfigure} \quad
\begin{subfigure}{0.45\textwidth}
 \includegraphics[width=\textwidth]{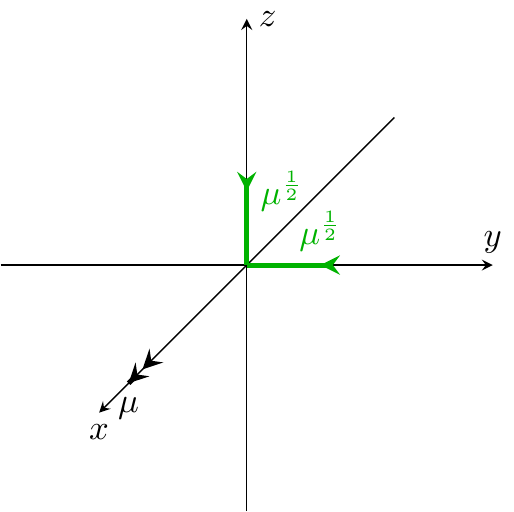}
\end{subfigure} 
\caption{Step 2: changing the contracting eigenvalues}
\end{figure}

\subsection*{Step 3}

Let us see that equation $(\ref{eq.cones})$ holds. Let $v'=DF_1(x)v$, with $v\in\C^{uu}_{\gamma}$, then 
$$\|(\pi_{2}+\pi_{3})v'\|\leq \eps|v_{1}|+\rho\|(\pi_{2}+\pi_{3})v\|\leq (\eps+\rho\gamma)|v_{1}|<\eta\mu |v_{1}|$$
if we choose $\frac{\eps+\rho\gamma}{\mu}<\eta<\gamma$.
\subsection*{Step 4}
Finally, we can compose $F_{1}$ with a small rotation $\alpha$ in the plane $yz$, supported in $U$, obtaining a volume preserving $F$ so that $DF(0)$ is a fixed point with an eigenvalue $\mu$ and two complex eigenvalues of modulus $\frac{1}{\sqrt{\mu}}$. Since the derivative of $\alpha$ can be made arbitrarily close to the identity, we can see that $DF(x)$ preserves cone fields, which we continue to call $\C^{uu}$ and $\C^{cs}$. This implies there is a global dominated splitting $TM=E^{uu}\oplus E^{cs}$. Since $E^{uu}$ is one-dimensional, it is hyperbolic.\newline\par
\begin{figure}[h]
\begin{subfigure}{0.45\textwidth}
 \includegraphics[width=\textwidth]{desenho3.pdf}
\end{subfigure} \quad
\begin{subfigure}{0.45\textwidth}
 \includegraphics[width=\textwidth]{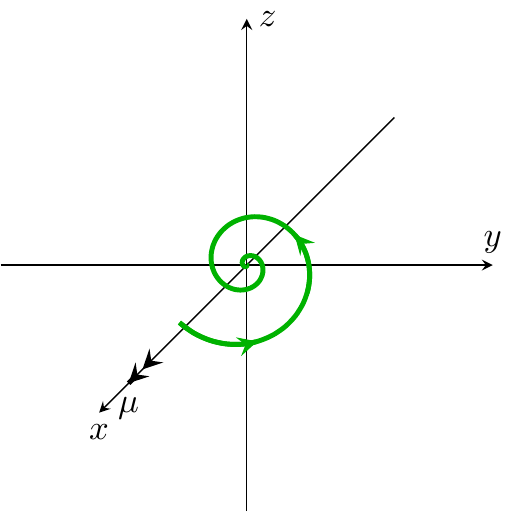}
\end{subfigure} 
\caption{Step 4: composing with a small rotation supported in $U$}
\end{figure}

 Call $f$ the diffeomorphism on $M$ obtained in this way, that is, so that its linearization in a neighborhood of the fixed point $p$ is $F$. Because of the form of our construction, 

\begin{equation}\label{U.contained}
U_{f}\subset \displaystyle \bigcup_{x\in W^s_{loc}(p)} W^{uu}_{\varepsilon}(x,f); 
\end{equation}

Also by construction, $f$ is partially hyperbolic on $M\setminus U$, therefore $\Lambda_{f}$, the maximal invariant of $M\setminus U_{f}$, is a partially hyperbolic set. By Theorem \ref{thm.maintheorem}, there exist stably ergodic diffeomorphisms arbitrarily $C^1$-close to $f$ which are not partially hyperbolic. Observe that by our construction such diffeomorphisms are isotopic to $f_0$.

\subsection*{Step 5}
If $p$ were a periodic point of period $n$ instead of a fixed point, we could proceed as follows. By applying the same construction for $f_0^n$ instead of $f_0$, we obtain a diffeomorphism $f_n$ which coincides with $f_0^n$ outside a small ball $B_{\varepsilon}(p)$ and that verifies the same properties that we obtained for $f$.

Consider $h = f_n \circ f^{-n}_0$. This diffeomorphism is the identity outside $B_{\varepsilon}(p)$. We may suppose that $\varepsilon$ is small enough such that for each $i, j \in \{0, \cdots, n-1\}$ with $i\neq j$ then $f_0^i (B_{\varepsilon}(p)) \cap f^j_0(B_{\varepsilon}(p)) = \emptyset$.

Let $f = h \circ f_0 $. Observe that for any $q\in B_{\varepsilon}(p)$ and for $j=0 ,\cdots, n-2$ we have that $h \circ f_0 (f^j_0(q)) = f_0^{j+1}(q)$. Hence,
\[
f^n(p) = \left(h \circ f_0\right)^{n}(p) = \left( h \circ f_0 \right)\circ f^{n-1}_0(p) = f_n \circ f^{-n}_0 \circ f_0^n ( p ) = f_n(p). 
\] 
This solves the case where $p$ is a periodic point.
\subsection*{Step 6} 
To finish the proof we have to show that we can always find a periodic point $p$ of period $n$ and stable index $2$ such that the eigenvalues of $Df^{n}(p)$ are all positive. To obtain a periodic point of stable index $2$, it is enough to remember that in the isotopy class of any partially hyperbolic diffeomorphism of a $3$-dimensional manifold, there exists a partially hyperbolic diffeomorphism with periodic points of different indices \cite{Ma1978}. After a small perturbation, we may assume that there is a transverse homoclinic point associated to this periodic point. Smale's theorem \cite{smale65} then implies the presence of a horseshoe, therefore of periodic orbits with any arbitrary code. In each ``block'' of the horseshoe, a combination of the orientations of the axes is either preserved or reversed. Choosing a suitable itinerary we can obtain periodic points $p$ of period $n$ such $Df^{n}(p)$ preserves the orientation of all axes, hence all eigenvalues of $Df^{n}(p)$ are positive.

\section{New examples and final remarks} \label{section.finalexamples}

In this section we will mention a few new classes of examples of stably ergodic diffeomorphism on $3$-dimensional manifolds. Before that, let us explain the examples that were already known which were considered by \cite{T2004}. 

Let $A\in \mathrm{SL}(3,\Z)$ be a hyperbolic matrix with eigenvalues $0< \lambda^{ss} < \lambda^{ws} <1< \lambda^{uu}$, and with eigendirections $E^{*}_A$, for $* = ss,ws, uu$ respectively, and let $f_A$ be the Anosov diffeomorphism induced by $A$ on $\T^3$. The construction then is done by fixing a small ball $B \subset \T^3$ around a fixed point $p\in \T^3$, and making a deformation supported in $B$ such that one obtains a diffeomorphism $f$ with the following properties: $f$ has a dominated splitting of the form $T\T^3 = E^{uu}_f \oplus E^{cs}_f $ such that the direction $E^{cs}_f$ is contained in a small cone around the direction $ E^{ws}_A \oplus E^{ss}_A$, and the direction $E^{uu}_f$ is contained in a small cone around the direction $E^{uu}_A$; $f$ coincides with $A$ outside $B$; the direction $E^{cs}_f$ does not admit any further dominated decomposition.

The diffeomorphism $f$ constructed above has the feature that the direction $E^{cs}_f(x)$ contracts uniformly for any point $x \in \T^3/B$. This is feature is crucial to prove, using a combinatorial argument, that if $B$ is sufficiently small then $f$ verifies the following: there exists $c>0$ such that Lebesgue almost every $x$ verifies
\begin{equation}\label{eq.mostlycontracting}
\displaystyle \limsup_{n\to +\infty} \frac{1}{n} \sum_{j=0}^{n-1} \log \|Df(f^j(x))|_{E^{cs}_f}\| < -c.
\end{equation}
Condition \eqref{eq.mostlycontracting} is sometimes called \emph{mostly contracting}, and it is crucial in Tahzibi's proof of stable ergodicity of $f$. The known examples of stably ergodic systems that are not partially hyperbolic have this property that the direction $E^{cs}_f$ contracts uniformly outside some fixed region in the manifold. Let us see some new examples for which Tahzibi's strategy does not apply.

\subsection*{Isotopic to Anosov$\times$Identity}

Let $g:\T^2 \to \T^2$ be a volume preserving Anosov diffeomorphism, and let $Id:S^1 \to S^1$ be the identity. Consider the volume preserving diffemorphism on $h = g \times Id$ and let $p$ be a fixed point for $h$. By the construction done in the proof of Theorem \ref{main.isotopy},  there exists a stably ergodic diffeomorphism $f$ isotopic to $h$, such that $f$ preserves a dominated splitting $T\T^3=  E^{uu} \oplus E^{cs}$ and such that $f|_{\T^3/B_{\varepsilon}(p)}$ is $C^1$-close to $h|_{\T^3/B_{\varepsilon}(p)}$, for some $\varepsilon>0$ small.

\subsection*{Isotopic to the time-one map of the geodesic flow}

Let $(S,g)$ be a compact riemannian manifold with a metric $g$ of constant negative curvature. Up to re-scaling $g$, we may suppose that $g$ admits a closed geodesic of length $1$. Let $T^1S$ be the unit tangent bundle of $S$ and $g_t:T^1S \to T^1S$ be the geodesic flow defined by $g$. Consider the diffeomorphism $h = g_1$ and observe that $h$ is partially hyperbolic with a dominated splitting $T T^1S = E^{uu} \oplus E^c \oplus E^{ss}$. Also observe that $h$ is isotopic to the identity. Since we assumed that there is a closed geodesic of length $1$, we have that there exists a point $p\in T^1S$ which is a fixed point for $h$. 

Arguing in the same way as before, there exists a stably ergodic diffeomorphism $f$ which is isotopic to $h$ (in particular, isotopic to the identity), and such that $f$ is not partially hyperbolic but admits a dominated splitting of the form $T T^1S = E^{uu} \oplus E^{cs}$.

\subsection*{Isotopic to anomalous partially hyperbolic diffeormorphisms}
New examples can be obtained in even stranger manifolds by suitably perturbing the examples obtained by Bonatti, Gogolev and Potrie in \cite{BGP2016}. These examples are in $3$-manifolds with fundamental groups of exponential growth, and they are not in the isotopy class of the identity. Bonatti, Gogolev, Hammerlindl and Potrie in \cite{BGHP2017} proved even more general examples of this kind that are not dynamically coherent. 

\section*{Acknowlegements:} We want to thank R. Ures for useful comments and remarks that helped improve this paper. We also thank S. Crovisier, and T. Fisher for useful conversations. 
\bibliographystyle{alpha}
\bibliography{Draft_Davi_Jana_Gabriel}

\end{document}